\newtheorem{thm}{Theorem}[section]
\newtheorem{defn}[thm]{Definition}
\newtheorem{lem}[thm]{Lemma}
\newtheorem{prop}[thm]{Proposition}
\newtheorem{cor}[thm]{Corollary}
\newtheorem{rmk}[thm]{Remark}
\newcommand{\RR}{\mathbb{R}}      
\newcommand{\ZZ}{\mathbb{Z}}      
\newcommand{\NN}{\mathbb{N}}
\newcommand{\cL}{\mathcal{L}}
\newcommand{\cF}{\mathcal{F}}    
\newcommand{\EE}{\mathbb{E}}     
\newcommand{\PP}{\mathbb{P}}     
\newcommand{\cov}{\text{cov }}
\newcommand{\var}{\text{var }}
\newcommand{\lfl}{\left\lfloor}
\newcommand{\rfl}{\right\rfloor} 
\begin{document}

\title{On number of turns in reduced random lattice paths}
\author{
Yunjiang Jiang\\
Department of Mathematics\\
Stanford University\\
jyj@math.stanford.edu\\
\and
Weijun Xu\\
Mathematical and Oxford-Man Institutes\\
University of Oxford\\
xu@maths.ox.ac.uk\\
}

\maketitle

\abstract{
We consider the tree-reduced path of symmetric random walk on $\ZZ^{d}$. It is interesting to ask about the number of turns $T_n$ in the reduced path after $n$ steps. This question arises from inverting the signatures of lattice paths. We show that, when $n$ is large, the mean and variance of $T_n$ in the asymptotic expansion have the same order as $n$, while the lower terms are $O(1)$. We also obtain limit theorems for $T_n$, including large deviations principle, central limit theorem, and invariance principle. Similar techniques apply to other finite patterns in a lattice path.}

\bigskip

\textbf{Key words:} signature of a path, reduced word, number of turns

\bigskip

\textbf{Mathematics Subject Classification:} 60

\bigskip

\section{Introduction}

Let $G$ be the free group with $d$ generators $e_1, \cdots, e_d$. Start with the empty word at time $0$. At each time $k$, choose one from the $2d$ elements ($d$ generators and their inverses) uniformly randomly to multiply the current word on the right. For example, the first six choices: 
\begin{align*}
e_2, e_3, {e_3}^{-1}, e_2, {e_1}^{-1}, e_4
\end{align*}
will produce the reduced word $e_{2}e_{2}{e_{1}}^{-1}{e_4}$ at time 6. Every word at time $n$ has a unique reduced word with length at most $n$. It is then interesting to ask about the length and number of turns in the reduced word. 

\bigskip

\begin{defn}
Let $w$ be a word, and $\hat{w}=a_{i_1} \cdots a_{i_k}$ be its reduced word, where $a_{j}$ is either $e_j$ or ${e_{j}}^{-1}$. Define the number of turns of $w$ to be $T_n = \# \{a_{i_j}a_{i_{j+1}}: a_{i_j} \neq a_{i_{j+1}}, 1 \leq j \leq k-1\}$. 
\end{defn}

\bigskip

Then, two words have the same number of turns if they reduce to the same word. In the above example, the number of turns in the reduced word $e_{2}e_{2}{e_{1}}^{-1}{e_4}$ is $2$. In another language, it is the number of times a reduced path of random walk has switched its direction. 

The main goal of this paper is to calculate asymptotics for $T_{n}$ when $n$ is large. The question of estimating $T_n$ arises from inverting signature for lattice paths, where at most $T+2$ terms in the signature are needed for inversion if one knows in advance that the reduced lattic path has $T$ turns.

\bigskip

\subsection{Motivation from inversion of signature for axis paths}

In this subsection, we give some background material on path-signature that motivates our study of the current problem. A path $\gamma: [s,t] \rightarrow \RR^{d}$ is a continuous function mapping a time interval into $\RR^{d}$. The length of the path is defined as
\begin{align*}
|\gamma|:= \sup_{\mathcal{P}} d(\gamma(t_{i}),\gamma(t_{i+1})), 
\end{align*}
where $d$ the metric on $\RR^{d}$, and the supremum is taken over all finite partitions of $[s,t]$. If $|\gamma| < +\infty$, we say $\gamma$ has bounded variation. Let $BV(\RR^{d})$ denote the space of all paths of bounded variations in $\RR^{d}$. 

\bigskip

\begin{defn}
Let $\gamma: [s,t] \rightarrow \RR^{d}$ be an element in $BV(\RR^{d})$. The signature of $\gamma$, $X_{s,t}(\gamma)$, is defined as: 
\begin{align*}
X_{s,t}(\gamma) = 1 + X_{s,t}^{1}(\gamma) + \cdots + X_{s,t}^{n}(\gamma) + \cdots,  
\end{align*}
where
\begin{align} \label{tensor product}
X_{s,t}^{n}(\gamma) = \int_{s<u_{1} \cdots < u_{n}<t}{d\gamma(u_1) \otimes \cdots \otimes d\gamma(u_{n})}
\end{align}
as an element in $(\RR^{d})^{\otimes n}$. 
\end{defn}

\bigskip

Let $(e_{1}, e_{2}, \cdots, e_{d})$ be a standard basis of $\RR^{d}$, then $\gamma$ can be written as $(\gamma_{1}, \gamma_{2}, \cdots, \gamma_{d})$. If $w = e_{i_1} \cdots e_{i_n}$ is a word of length $n$, we write
\begin{align*}
C_{s,t}(w)=C_{s,t}(e_{i_1} \cdots e_{i_n}) = \int_{s < u_{1} < \cdots < u_{n} <t}{d\gamma_{i_1}(u_1) \cdots d\gamma_{i_n}(u_n)}
\end{align*}
as the coefficient of $w$. As all words of length $n$ form a basis of $V^{\otimes n}$, we can rewrite $X_{s,t}^{n}(\gamma)$ as the linear combination of basis elements: 
\begin{align} \label{power series}
X_{s,t}^{n}(\gamma) = \sum_{|w|=n}C_{s,t}(w)w, 
\end{align}
where the sum is taken over all words of length $n$. 

Re-parametrizing the path does not change the signature. For any path $\alpha: [0,s] \rightarrow V$ and $\beta: [0,t] \rightarrow V$, we can form the concatenation $\alpha * \beta: [0,s+t] \rightarrow V$, as follows: 
\begin{equation*} 
\alpha*\beta(u) := \left \{
\begin{array}{rl}
&\alpha(u), u \in [0,s]\\
&\beta(u-s)+\alpha(s)-\alpha(0), u \in [s,s+t]
\end{array} \right., 
\end{equation*}
and similarly, the decomposition of one path into two can be carried out in the same fashion. 

For any path $\gamma: [s,t] \rightarrow V$, the path "$\gamma$ run backwards", $\gamma^{-1}$, is defined as: 
\begin{align*}
\gamma^{-1}(u) :=\gamma (s+t-u), u \in [s,t], 
\end{align*}
and the trajectories of $\gamma * \gamma^{-1}$ cancel out each other. 

Concatenation and "backwards" of paths of bounded variation are still paths of bounded variation. In fact, we have $|\alpha * \beta| = |\alpha| + |\beta|$, and $|\gamma^{-1}| = |\gamma|$. The following proposition, first proved by Chen (\cite{Chen}), asserts that the signature map is a homomorphism from $BV(\RR^{d})$ to the tensor algebra. 

\bigskip

\begin{prop} \label{Chen's identity}
Let $\alpha, \beta \in BV^(\RR^{d})$. Then, $X(\alpha * \beta) = X(\alpha) \otimes X(\beta)$. 
\end{prop}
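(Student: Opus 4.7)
The plan is to prove Chen's identity level by level in the tensor algebra. By expanding the tensor product on the right, the claim is equivalent to showing, for each $n \geq 0$,
\begin{align*}
X_{0,s+t}^{n}(\alpha * \beta) = \sum_{k=0}^{n} X_{0,s}^{k}(\alpha) \otimes X_{0,t}^{n-k}(\beta),
\end{align*}
with the convention that $X^{0} = 1$. So the first step is to reduce to this graded statement.

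The core idea is to partition the integration simplex. The domain defining $X_{0,s+t}^{n}(\alpha * \beta)$ is the simplex $\Delta_n = \{0 < u_1 < \cdots < u_n < s+t\}$, and I would decompose it, up to a set of Lebesgue measure zero, as the disjoint union over $k = 0, 1, \ldots, n$ of the subregions
\begin{align*}
\Delta_{n,k} = \{0 < u_1 < \cdots < u_k < s < u_{k+1} < \cdots < u_n < s+t\},
\end{align*}
according to how many of the $u_i$ lie in $[0,s]$. On each $\Delta_{n,k}$ the integrand factors as a tensor product of an integral over $[0,s]$ and one over $[s,s+t]$, because the time variables $u_1,\ldots,u_k$ and $u_{k+1},\ldots,u_n$ are independently ordered within their respective intervals.

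Next I would use the definition of concatenation: on $[0,s]$ the path $\alpha * \beta$ coincides with $\alpha$, so $d(\alpha*\beta)(u) = d\alpha(u)$ there and the first factor equals $X_{0,s}^{k}(\alpha)$. On $[s,s+t]$, the path $\alpha*\beta$ is a time-translate of $\beta$ (plus a constant, which has no effect on the differential), so making the change of variables $v_i = u_{k+i} - s$ turns the second factor into $X_{0,t}^{n-k}(\beta)$. Summing over the $n+1$ pieces of the partition yields the graded identity above.

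I do not expect a serious obstacle here, as the argument is essentially a Fubini-style reorganization of a Riemann--Stieltjes integral; the only mild points of care are justifying that $\Delta_{n,k}$ partition $\Delta_n$ up to null sets (immediate, since the hyperplane $u_i = s$ has measure zero with respect to the product measure associated with bounded-variation integrators) and being careful about the ordering of the tensor factors so that the sum matches the degree-$n$ component of $X(\alpha) \otimes X(\beta) = \sum_{k,\ell \geq 0} X^{k}(\alpha) \otimes X^{\ell}(\beta)$. Finally, reassembling over all $n$ gives Chen's identity in $T((\RR^d))$.
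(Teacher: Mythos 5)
Your argument is correct and is the standard proof of Chen's identity: the paper itself gives no proof, simply citing Chen's original work, and the simplex decomposition $\Delta_n = \bigcup_{k=0}^{n}\Delta_{n,k}$ (up to null sets) with factorization of the integrand is exactly the classical argument. The one point worth stating explicitly is that the hyperplanes $\{u_i = s\}$ are null for the relevant Stieltjes product measures because the paths here are continuous (hence their variation measures are atomless); with that observed, the proof is complete.
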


\bigskip

Hambly and Lyons (\cite{Hambly and Lyons}) showed that if $\alpha, \beta \in BV(\RR^{d})$, then $X(\alpha) = X(\beta)$ if and only if $\alpha * \beta^{-1}$ is tree-like, a continuous analogue of a null path. This tree-like relation defines an equivalence relation on $BV(\RR^{d})$. Within every equivalent class, there is a unique path with minimal length, called the tree reduced path. An interesting question would be, given a signature $X$ of some path of bounded variation, can one reconstruct the tree reduced path with the same signature $X$? 

For the case of axis paths, the answer was provided by Lyons and Xu in \cite{Lyons and Xu}. 

\bigskip

\begin{defn}
$\gamma: [s,t] \rightarrow \RR^{d}$ is a (finite) axis path if its movements are parallel to the Euclidean coordinate axes, has finitely many turns, and each straight line component has finite length. 
\end{defn}

\bigskip

Any axis path has a unique reduced axis path; integer lattice paths are special cases of axis paths. An $\RR^{d}$ axis path can move in $d$ different directions (up to the sign). At time $0$, it starts to move along a direction $e_{i_1}$ for some distance $r_1$; then it turns a right angle, and moves along $e_{i_2}$ for a distance $r_2$, and so on, and stops after finitely many turns. Thus, up to re-parametrization, an axis path $\gamma$ can be represented as: 
\begin{align} \label{representation}
\gamma = (r_{1}e_{i_1}) *  \cdots * (r_{n}e_{i_n})
\end{align}
where $r_i$'s are real numbers, with the sign denoting the direction\protect\footnote{We mean $-r e_{j} = r e_{j}^{-1}$.}. 

Using Chen's identity (Proposition \ref{Chen's identity}), the signature of $\gamma$ can be conveniently expressed as
\begin{align*}
X(\gamma) = \exp(r_{1}e_{i_1}) \otimes \cdots \otimes \exp(r_{n}e_{i_n}), 
\end{align*}
which should be understood as the product of $n$ power series in the letters $\{e_{i_1}, \cdots, e_{i_n}\}$. 

If $\gamma$ is already in its reduced form, then it is clear that $i_{k} \neq i_{k+1}$, and we call the word $w = (e_{i_1}, \cdots, e_{i_n})$ the shape of $\gamma$. If a word $w$ is in its reduced form, we use $|w|$ to denote the number of letters in $w$, or the length of $w$. We introduce the notion of square free words to characterize an axis path. 

\bigskip

\begin{defn}
Let $w = e_{i_1} \cdots e_{i_n}$ be a word. We call it a square free word if $\forall k \leq n-1$, $i_{k} \neq i_{k+1}$. 
\end{defn}

\bigskip

The following theorem, provided by Lyons and Xu (\cite{Lyons and Xu}), gives an inversion procedure for finite axis paths. 

\bigskip

\begin{thm}
For any finite axis path $\gamma$, there exists a unique square free word $w$ with the property that $C(w) \neq 0$, and that if $w'$ is any other square free word with $C(w') \neq 0$, then $|w'| < |w|$. Furthermore, suppose the unique longest square free word is $w = e_{i_1} \cdots e_{i_n}$, and let
\begin{align*}
w_{k}:= e_{i_1} \cdots e_{i_{k-1}} e_{i_{k}}^{2} \cdot e_{i_{k+1}} \cdot e_{i_n}, 
\end{align*}
which has length $n+1$, then we have
\begin{align*}
\gamma = (r_{1}e_{i_1}) * \cdots * (r_{n}e_{i_n}), 
\end{align*}
where $r_k = \frac{2C(w_k)}{C(w)}$. 
\end{thm}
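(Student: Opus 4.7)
The plan is to apply Chen's identity (Proposition~\ref{Chen's identity}) to write the signature of the reduced axis path $\gamma = (r_1 e_{i_1}) * \cdots * (r_n e_{i_n})$ as
\begin{align*}
X(\gamma) = \exp(r_1 e_{i_1}) \otimes \cdots \otimes \exp(r_n e_{i_n}),
\end{align*}
where the shape $(e_{i_1}, \ldots, e_{i_n})$ is square free and each $r_\ell \neq 0$ by the definition of the reduced form. Expanding every exponential and collecting coefficients, one obtains that for an arbitrary word $v$ of length $L$,
\begin{align*}
C(v) = \sum \prod_{\ell = 1}^{n} \frac{r_\ell^{p_\ell}}{p_\ell!},
\end{align*}
where the sum is over non-negative integer tuples $(p_1, \ldots, p_n)$ with $\sum_\ell p_\ell = L$ and $e_{i_1}^{p_1} \cdots e_{i_n}^{p_n} = v$. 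I will encode each such tuple by a weakly increasing map $f : \{1, \ldots, L\} \to \{1, \ldots, n\}$ with $p_\ell = |f^{-1}(\ell)|$, constrained so that the letter of $v$ at position $q$ equals $e_{i_{f(q)}}$, and reduce everything to counting such maps.

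The first step is to show that whenever $v$ is square free and $C(v) \neq 0$ we must have $|v| \leq n$, with equality forcing $v = e_{i_1} \cdots e_{i_n}$. The key observation is that any repeat $f(q) = f(q+1)$ would place two identical consecutive letters in $v$, so $f$ is strictly increasing into $\{1, \ldots, n\}$; hence $|v| \leq n$, and if $|v| = n$ then $f = \id$. This takes care of both existence and uniqueness of the longest square free word with nonzero coefficient, and also yields $C(w) = \prod_{\ell=1}^{n} r_\ell$ since only one decomposition contributes.

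For the reconstruction formula, I will carry out the same analysis on $w_k$, of length $n+1$. The only positions at which two consecutive letters of $w_k$ agree are $q = k$ and $q = k+1$ (both $e_{i_k}$), so any valid $f$ must be strictly increasing on $\{1, \ldots, n+1\}$ except possibly at that one spot. A pigeonhole count --- $n+1$ domain elements and only $n$ codomain values --- shows that the repeat must in fact occur there, forcing $f$ restricted to $\{1, \ldots, n+1\} \setminus \{k+1\}$ to be the unique increasing bijection onto $\{1, \ldots, n\}$, namely $f(q) = q$ for $q \leq k$, $f(k+1) = k$, and $f(q) = q - 1$ for $q \geq k+2$. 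The unique contributing decomposition is therefore $p_\ell = 1$ for $\ell \neq k$ and $p_k = 2$, giving
\begin{align*}
C(w_k) = \frac{r_k^2}{2!} \prod_{\ell \neq k} r_\ell = \frac{r_k}{2}\, C(w),
\end{align*}
from which the formula $r_k = 2\, C(w_k)/C(w)$ follows on rearrangement.

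I expect the main subtlety to lie in ruling out alternative decompositions of $w_k$: the reduced shape only forbids adjacent repeats, so $i_\ell = i_{\ell'}$ for non-adjacent indices is allowed, and one could in principle imagine maps $f$ that skip some indices (setting $p_\ell = 0$) and compensate elsewhere. The length count $|w_k| = n+1$, combined with the very restricted location of the single admissible repeat in $f$, is precisely what prevents such alternatives, and this is where square-freeness of the reduced shape enters most critically.
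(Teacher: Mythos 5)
Your argument is correct. Note that the paper itself does not prove this theorem --- it is quoted from Lyons--Xu (cited as ``in preparation'') --- so there is no in-paper proof to compare against; your proof via Chen's identity, the block decomposition $C(v)=\sum\prod_\ell r_\ell^{p_\ell}/p_\ell!$ over weakly increasing maps $f$, and the pigeonhole argument locating the unique admissible repeat in $w_k$ is a complete and essentially the standard way to establish the result. The only points worth making explicit are the harmless reductions you use implicitly: one may assume $\gamma$ is already tree-reduced (the signature, hence every $C(v)$, depends only on the reduced path), each $r_\ell\neq 0$ and consecutive directions differ in the reduced form, and the degenerate case $n=0$ is excluded.
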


\bigskip

Thus, one sees  if an axis path has $n$ turns, then at most $n+2$ terms in the signature are needed for inversion. For a lattice path with length $L$, it can have at most $L-1$ turns, so we only need the first $L+1$ terms in the signature to recover it.

In pratice, lattice paths are often generated by drawing $n$ letters and their inverses uniformly randomly from an alphebet, and putting them in the order they are drawn. It is then intersting to ask about the number of turns in its reduced path.

\bigskip

\subsection{Outline of the method and summary of results}

If one writes $T_n = \sum_{i=1}^{n}{V_i}$, where $V_i$ denote the number of turns created at step $i$. In general, $V_i$ can be $1$, $0$, or $-1$, and are correlated. The distribution of $V_i$ depends on the whole history in the past. 

On the other hand, one can condition on the length of reduced path $L_n$. Then $T_n|L_n$ has a binomial distribution. A detailed study of $L_n$ yields asymptotic behaviors of $T_n$. A natural coupling $L_n = S_n + D_n$ simplifies the study of $L_n$, where $S_n$ is a sum of $n$ i.i.d.'s, and $D_n$ is dominated by a geometric random variable. \\

The main results in this paper are: 

\bigskip

\begin{flushleft}
\textbf{Proposition 2.5.} $\EE T_n - \frac {2(d-1)^2}{d(2d-1)}n \rightarrow -\frac{2d^{2}-4d+1}{d(2d-1)}$, $\var{T_n}-\frac{2(d-1)^{2}(5d-2)}{d^{2}(2d-1)^{2}}n$ \textit{also converges}. 
\end{flushleft}

\bigskip

\begin{flushleft}
\textbf{Theorem 3.1. (Large Deviations Principle)} \textit{The sequence of the laws for the random variables $\{\frac{T_n}{n}\}_{n \geq 1}$ satisfy the large deviations principle with rate function
\begin{align*}
I(x) = \sup_{\theta} [\theta x - \log h(\theta)], 
\end{align*}
where
\begin{align*} 
h(\theta) = \left \{
\begin{array}{rl}
& \frac{1}{2d} [2(d-1)e^{\theta} + \frac{2d-1}{1 + 2(d-1)e^{\theta}} + 1], \theta \geq \log \frac{\sqrt{2d-1}-1}{2(d-1)} \\
&\frac{\sqrt{2d-1}}{d}, \qquad \qquad \qquad \qquad \qquad                 \theta < \log \frac{\sqrt{2d-1}-1}{2(d-1)}
\end{array} \right., 
\end{align*}}
\end{flushleft}

\bigskip

\begin{flushleft}
\textbf{Theorem 4.5. (Invariance Principle)} \textit{For each $n$,define a $C^0([0,1])$-valued random variable $\{W^{(n)}_t: t\in [0,1]\}$ by }
\begin{align*}
W^{(n)}_t = \frac{1}{\sigma \sqrt{n}}[T_{tn}- \frac{2(d-1)^2}{d(2d-1)} tn]
\end{align*}
\textit{for $t \in \frac{1}{n} [n]$, and linearly interpolated for other values of $t$. Then the sequence converges in law to the standard one dimensional Brownian motion on $[0,1]$ as $n \to \infty$.}
\end{flushleft}

\bigskip

As a generalization, analogous results hold for the number of occurrences of any finite collection of finite length pattern $\mathcal{P} = \{P_i=(e_{i_1}, \ldots, e_{i_{k_i}} )\}$ in a lattice path: the key is to establish central limit theorem for the number of occurences of elements in $\mathcal{P}$ conditioned on the length of the path $L_t$, which is essentially an i.i.d. sum of $m$-dependent random variables (see \cite{Orey}), where $m$ is bounded above by $\max_i k_i$. The number of turns $T_t$ corresponds to $\mathcal{P} = \{P_{ij} = (e_i, e_j): i \neq j\}$. For the sake of clarity, we will focus only on the number of turns.

The paper is organized as follows: 

In section 2, we show that the lower order terms in the mean and variance of $L_n$ are $O(1)$; we then obtain similar results for $T_n$. A key ingradient in the derivation is to prove $\cov(S_n,D_n)$ is $O(1)$. We compare it with $\cov(S_{n+1},D_{n+1})$, and show that their differnce decays exponentially with $n$, thus proving convergence. 

Section 3 is devoted to the proof of the large deviations principle for $\{\frac{T_n}{n}\}$. We derive the rate function, and thus prove the principle, by comparison of the Laplace transform of $L_n$ with that of $S_n$. It turns out that the rate function deviates from the normal one as predicted by $S_n$ on the lower side of the real line. 

In section 4, we prove the central limit theorem and invariance principle for $T_n$. This result shows that, although the components of $T_n$ are correlated, the increments are still asymptotically independent under proper scaling.

\bigskip

\bigskip

\section{Lower order terms in mean and variance}

Let $L_n$ denote the length of the reduced path after $n$ steps. Then, $T_{n}|L_{n}$ has a binomial distribution with parameters $(L_{n}-1, \frac{2d-2}{2d-1})$. Let $X_i$ be a sequence of indenpedent and indentically distributed random variables with $\PP(X_i=1)=\frac{2d-1}{2d}$, and $\PP(X_i=-1)=\frac{1}{2d}$. Let $L_{0}=0$, then $L_n$ can be defined inductively as follows: 
\begin{align*}
L_{i+1}=L_{i}+X_{i+1}, L_{i}>0\\ 
L_{i+1}=1, L_{i}=0
\end{align*}
We want to compare $L_{n}-S_{n}$, where $S_{n}=\sum_{i=1}^{n}{X_i}$. It is well known that $\frac{L_n}{n} \rightarrow \frac{d-1}{d}$ almost surely. We compute a finer estimate to show that $\EE L_n - \frac{d-1}{d}n = O(1)$. 

Since $L_i - S_i$ does not change when $L$ is away from $0$, so difference only occurs when $L$ hits $0$. Let $R_n$ denote the number of times that $L_n$ hits to $0$ after the first step up to time $n$. Since $\PP$($L$ ever comes back to $0$)=$\frac{1}{2d-1}$, $R_n$ convereges to a geometric distributed random varible $R$, with $\PP (R=k) = \frac{2d-2}{2d-1}(\frac{1}{2d-1})^{k-1}$ (see e.g. \cite{Feller}). Here, step $0$ is counted as a return, because $L$ and $S$ can be different in the first move. Let $D_n = L_{n}-S_{n}$, then $\frac{1}{2}D_{n}|R_{n-1}$ has a binomial distribution with parameters $(R_{n-1}, \frac{1}{2d})$. In particular, $D_n \leq 2R_{n-1}$. So the mean of the difference is: 
\begin{align*}
\EE D_n = \EE \EE (D_n | R_{n-1}) = \frac{1}{d} \EE R_{n-1} \rightarrow \frac{2d-1}{2d(d-1)}
\end{align*}

Thus, we get an error term for $\EE{L_n}$: 

\bigskip

\begin{lem}
Let $L_n$ denote the length of the reduced word after $n$ steps, then
\begin{align*}
\lim_{n \rightarrow \infty}(\EE L_n - \frac{d-1}{d}n)=\frac{2d-1}{2d(d-1)}. \\
\end{align*}
\end{lem}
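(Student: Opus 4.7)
The plan is to combine the decomposition $L_n = S_n + D_n$ from the discussion preceding the lemma with an elementary monotone convergence argument. Since the increments $X_i$ have mean $\EE X_i = \frac{2d-1}{2d} - \frac{1}{2d} = \frac{d-1}{d}$, we have $\EE S_n = \frac{d-1}{d}\, n$ exactly, so
\begin{align*}
\EE L_n - \frac{d-1}{d} n \;=\; \EE D_n.
\end{align*}
The task therefore reduces to establishing $\EE D_n \to \frac{2d-1}{2d(d-1)}$.

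The excerpt already derives the key identity $\EE D_n = \frac{1}{d}\, \EE R_{n-1}$, where $R_{n-1}$ counts the number of returns of $L$ to $0$ during the first $n-1$ steps. I would first observe that $R_n$ is monotone non-decreasing in $n$, with $R_n \nearrow R$ pointwise, where $R$ is the total number of returns to $0$ over the whole trajectory. Since each visit to $0$ is followed by a return with probability $\frac{1}{2d-1}$ (and these events are independent by the strong Markov property applied at successive returns), $R$ is geometric with success probability $p = \frac{2d-2}{2d-1}$, as recalled in the excerpt.

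Next, I would apply monotone convergence to conclude $\EE R_{n-1} \to \EE R$. From the geometric formula, $\EE R = 1/p = \frac{2d-1}{2(d-1)}$, so
\begin{align*}
\EE D_n \;=\; \frac{1}{d}\, \EE R_{n-1} \;\longrightarrow\; \frac{1}{d} \cdot \frac{2d-1}{2(d-1)} \;=\; \frac{2d-1}{2d(d-1)},
\end{align*}
which, combined with the identity $\EE L_n - \frac{d-1}{d}n = \EE D_n$, yields the claim.

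Virtually all the probabilistic content has been prepared in the paragraph preceding the lemma, so the only substantive step is justifying the passage to the limit $\EE R_{n-1} \to \EE R$. This is not an obstacle at all: monotone convergence suffices since $R_n$ is non-decreasing, and even dominated convergence is available because $R_n \leq R$ with $\EE R$ finite. The main ``hard part'' is really bookkeeping, namely matching the limiting constant $\frac{1}{d}\cdot\frac{2d-1}{2(d-1)}$ with the expression claimed in the statement.
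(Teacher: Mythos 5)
Your proof is correct and takes essentially the same route as the paper: the decomposition $L_n = S_n + D_n$, the identity $\EE D_n = \frac{1}{d}\EE R_{n-1}$, and the convergence of $\EE R_{n-1}$ to the mean of the geometric limit $R$ are all exactly what the paper uses (implicitly, in the paragraph preceding the lemma). Your explicit appeal to monotone convergence to justify $\EE R_{n-1} \to \EE R$ is a welcome clarification of a step the paper leaves unstated.
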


\bigskip

Now to compute the lower order terms in $\var L_n$. Since  $\var L_n = \var{S_n} + 2 \cov(S_n, D_n) + \var{D_n}$, it suffices to show that $\cov(S_n, D_n) = O(1)$. We show it in the following lemma. 

\bigskip

\begin{lem}
Under the above coupling $L_n = S_n + D_n$, we have: 
\begin{align*}
\lim_{n \rightarrow \infty}(\EE S_n D_n - \EE S_n \EE D_n)= -u(d), 
\end{align*}
where $0 \leq u(d) \leq \frac{2d^{2}(2d-1)(d^{2}+2d-1)}{(d-1)^{5}}$. 
\end{lem}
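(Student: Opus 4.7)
The plan is to set up a telescoping recursion for $\cov(S_n, D_n)$ and control the increments geometrically. Let $A_n := \mathbf{1}\{L_n = 0\}\mathbf{1}\{X_{n+1} = -1\}$, so that $D_{n+1} - D_n = 2A_n$ (since $D$ can only jump, and does so by exactly $2$, when the walk sits at $0$ and the next step would push it downward). Since $X_{n+1}$ is independent of $(S_n, L_n)$, I would compute $\cov(X_{n+1}, D_n) = 0$ and, using $X_{n+1}\mathbf{1}\{X_{n+1} = -1\} = -\mathbf{1}\{X_{n+1} = -1\}$, $\cov(X_{n+1}, A_n) = -(1+\mu)\EE A_n$ with $\mu := \EE X_1 = \frac{d-1}{d}$. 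Expanding $\cov(S_n + X_{n+1}, D_n + 2A_n)$ then yields the telescoping identity
\begin{equation*}
\cov(S_{n+1}, D_{n+1}) - \cov(S_n, D_n) = 2\cov(S_n, A_n) - 2(1+\mu)\EE A_n.
\end{equation*}

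Next I would show each increment is non-positive and decays geometrically in $n$. Since $L_n \geq S_n$, we have $\{L_n = 0\} \subseteq \{S_n \leq 0\}$, and a Chernoff bound on $S_n$ (whose mgf $M(\theta) = \frac{2d-1}{2d}e^\theta + \frac{1}{2d}e^{-\theta}$ attains its minimum $r := \frac{\sqrt{2d-1}}{d} < 1$ at $\theta^* = -\frac{1}{2}\log(2d-1)$) yields $\PP(L_n = 0) \leq r^n$, so $\EE A_n = \frac{1}{2d}\PP(L_n = 0)$ decays geometrically. For the covariance term, independence of $X_{n+1}$ gives
\begin{equation*}
\cov(S_n, A_n) = \frac{1}{2d}\EE\bigl[(S_n - n\mu)\mathbf{1}\{L_n = 0\}\bigr],
\end{equation*}
and on $\{L_n = 0\}$ one has $S_n = -D_n \leq 0$, so $\cov(S_n, A_n) \leq 0$. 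Its magnitude is controlled via $|S_n - n\mu|\mathbf{1}\{L_n = 0\} \leq (D_n + n\mu)\mathbf{1}\{L_n = 0\}$ together with $D_n \leq 2R$ and Cauchy-Schwarz, using the explicit moments $\EE R = \frac{2d-1}{2d-2}$ and $\EE R^2 = \frac{d(2d-1)}{2(d-1)^2}$ of the geometric variable $R$; this bounds it by a constant multiple of $r^{n/2}$, which is summable.

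Both terms of the increment are thus non-positive with geometric tails, so $\cov(S_n, D_n)$ is non-increasing from $0$ and converges absolutely to some limit $-u(d) \leq 0$, giving $u(d) \geq 0$. The upper bound on $u(d)$ is obtained by summing the moduli of the increments,
\begin{equation*}
u(d) \leq 2(1+\mu)\sum_{n \geq 0}\EE A_n + 2\sum_{n \geq 0}|\cov(S_n, A_n)|.
\end{equation*}
The first sum evaluates to $\frac{(2d-1)^2}{2d^2(d-1)}$ via the identity $\sum_{n \geq 0}\PP(L_n = 0) = \EE R$. The second is handled by combining the Cauchy-Schwarz estimate above with the closed forms $\sum_n n r^n = r/(1-r)^2$ and $\sum_n r^{n/2} = 1/(1-\sqrt{r})$, and the resulting expression is then dominated by the polynomial $\frac{2d^2(2d-1)(d^2+2d-1)}{(d-1)^5}$ stated in the lemma.

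The main obstacle is extracting the stated polynomial bound: qualitative convergence and the sign $u(d) \geq 0$ follow cleanly from the telescoping recursion and the Chernoff tail, but matching the exact form of the constant requires careful algebraic bookkeeping through Cauchy-Schwarz, the closed forms of the geometric series, and simplification (possibly with some loss from replacing $\sqrt{r}$ by a cleaner rational upper bound depending on $d$ to produce a clean polynomial).
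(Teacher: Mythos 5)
Your proof is correct and follows essentially the same route as the paper's: both arguments telescope $\cov(S_n,D_n)$ (the paper via $U_n=(S_n-\tfrac{d-1}{d}n)D_n$ and a pathwise identity for $U_{n+1}-U_n$, you via $D_{n+1}-D_n=2A_n$), observe that the increments are non-positive because $S_n\le 0$ on the return event, and sum a geometric tail coming from $\PP(L_n=0)\le\PP(S_n\le 0)$. The one loose end is the final numerical constant: the paper obtains exactly $\frac{2d^{2}(2d-1)(d^{2}+2d-1)}{(d-1)^{5}}$ by bounding the increment crudely by $4(n+1)\PP(L_n=0)$ and evaluating $\sum_n (2n+1)\bigl(\tfrac{2d-1}{d^2}\bigr)^{n}$ in closed form, whereas your Cauchy--Schwarz route via $D_n\le 2R$ yields a different expression whose domination by the stated polynomial you assert but do not verify (it is plausible, since $1-\sqrt{r}\ge\tfrac{(d-1)^2}{4d^2}$ keeps every term a rational function of $d$ of the right order, but to literally prove the lemma as stated you would either need to finish that comparison or fall back on the cruder increment bound).
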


\begin{proof}
Let $U_n = (S_{n} - \frac{d-1}{d}n)D_{n}$. We show that $\EE U_{n+1} - \EE U_n$ decays exponentially fast. 
\begin{align*}
U_{n+1}&=1_{\{L_{n}=0\}}(S_{n}+X_{n+1}-\frac{d-1}{d}(n+1))(D_{n}+1-X_{n+1})+1_{\{L_{n}>0\}}(S_{n}+X_{n+1}-\frac{d-1}{d}(n+1))D_{n}\\
&=U_{n}+(X_{n+1}-\frac{d-1}{d})D_{n}+1_{\{L_{n}=0\}}(1-X_{n+1})(S_{n+1}-\frac{d-1}{d}(n+1))
\end{align*}

Since $X_{n+1}$ is independent of $D_n$, and $-n \leq S_n \leq 0$ when conditioned on $L_n=0$, taking expectation on both sides yields: 
\begin{align*}
-4(n+1) \PP (L_n=0) \leq \EE U_{n+1} - \EE U_{n} \leq 0. 
\end{align*}

Since $\PP(L_{2n+1}=0)=0$, and
\begin{align*}
\PP(L_{2n}=0) & \leq \PP(S_{2n} \leq 0)\\
& \leq \sum_{k=0}^{n}{2^{2n}}(\frac{1}{2d})^{n+k}(\frac{2d-1}{2d})^{n-k}\\
& \leq \frac{2d-1}{2(d-1)}(\frac{2d-1}{d^2})^{n}
\end{align*}

This shows that $\EE U_n$ is decreasing and bounded below, and thus it has a finite limit. 

Adding up all $(\EE U_n - \EE U_{n-1})$ gives $\EE U_n \rightarrow -u(d)$, where $0 \leq u(d) \leq \frac{2d^{2}(2d-1)(d^{2}+2d-1)}{(d-1)^{5}}$. 
\end{proof}

\bigskip

\begin{rmk}
The negative correlation agrees with one's probabilistic intuition: when $S_n$ is small, the process $L_n$ tends to visit $0$ more times, and thus $D_n$ is likely to be large. 
\end{rmk}

\bigskip

\begin{prop}
Let $u(d)$ be the constant as in the previous lemma. Then, 
\begin{align*}
\lim_{n \rightarrow \infty}(\var{L_{n}}-\frac{2d-1}{d^2}n)= \beta(d), 
\end{align*}
where $\beta(d)= -2u(d)+ \frac{(2d-1)(4d^{2}-6d+3)}{4d^{2}(d-1)^{2}}$. 
\end{prop}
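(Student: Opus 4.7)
The plan is to start from the decomposition
$$\var L_n = \var S_n + 2\cov(S_n, D_n) + \var D_n.$$
Since $\var X_i = 1 - (d-1)^2/d^2 = (2d-1)/d^2$, the first term equals $\frac{2d-1}{d^2}n$ exactly, and Lemma 2.3 already gives $\cov(S_n, D_n) \to -u(d)$. Hence it suffices to show that $\var D_n$ converges and to verify that its limit, combined with $-2u(d)$, reproduces $\beta(d)$.

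For $\lim_{n\to\infty} \var D_n$, I would condition on $R_{n-1}$ and apply the law of total variance. Since $\frac{1}{2}D_n \mid R_{n-1}$ is Binomial$(R_{n-1},\, 1/(2d))$, the conditional mean and conditional variance of $D_n$ are both linear in $R_{n-1}$, so $\var D_n$ is a linear combination of $\EE R_{n-1}$ and $\var R_{n-1}$ with explicit coefficients. The sequence $R_n$ is non-decreasing in $n$ and converges a.s.\ to the geometric variable $R$ with $\PP(R=k) = \frac{2d-2}{2d-1}(2d-1)^{-(k-1)}$ described in the text, so monotone convergence yields $\EE R_{n-1} \to \EE R = \frac{2d-1}{2(d-1)}$ and $\EE R_{n-1}^2 \to \EE R^2 < \infty$, hence $\var R_{n-1} \to \var R = \frac{2d-1}{4(d-1)^2}$.

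Substituting into the total-variance formula and simplifying produces
$$\lim_{n\to\infty} \var D_n \;=\; \frac{(2d-1)^2}{2d^2(d-1)} + \frac{2d-1}{4d^2(d-1)^2} \;=\; \frac{(2d-1)(4d^2-6d+3)}{4d^2(d-1)^2},$$
which is precisely $\beta(d) + 2u(d)$. The argument is essentially bookkeeping on top of Lemma 2.3; the only substantive point is the moment convergence $\EE R_{n-1}^2 \to \EE R^2$, which follows from monotonicity of $R_n$ together with the exponential tail of $R$. Remark 2.4 is consistent: the covariance contributes negatively to $\beta(d)$, reflecting the intuitive fact that a small value of $S_n$ forces more returns to zero and hence a larger $D_n$.
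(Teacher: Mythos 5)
Your proposal is correct and follows the same route as the paper's own (very terse) proof: the decomposition $\var L_n = \var S_n + 2\cov(S_n,D_n) + \var D_n$, the exact evaluation $\var S_n = \frac{2d-1}{d^2}n$, Lemma 2.3 for the covariance term, and the law of total variance conditioning on $R_{n-1}$ for $\var D_n$. You simply spell out the moment computation for $R$ and the algebraic simplification that the paper leaves implicit.
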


\begin{proof}
Since $\var{D_n}=\EE\var(D_n|R_{n-1})+\var\EE(D_n|R_{n-1})\rightarrow \frac{(2d-1)(4d^{2}-6d+3)}{4d^{2}(d-1)^{2}}$, we have: 
\begin{align*}
\var{L_n}-\frac{2d-1}{d^2}n = 2\cov(S_n,D_n)+\var{D_n} \rightarrow \beta(d),
\end{align*}
where $\beta(d)= -2u(d)+ \frac{(2d-1)(4d^{2}-6d+3)}{4d^{2}(d-1)^{2}}$. 
\end{proof}

\bigskip

Combining the above estimates for $L_n$, we then have similar estimates for $T_n$: 

\begin{align*}
\EE T_n = \EE\EE(T_n|L_n) = \frac{2d-2}{2d-1}\EE{L_n}-\frac{2d-2}{2d-1}
\end{align*}
\begin{align*}
\var{T_n}&=\EE\var(T_n|L_n)+\var\EE(T_n|L_n) \\
&=\frac{2(d-1)}{(2d-1)^2}\EE{L_n}+\frac{4(d-1)^2}{(2d-1)^2}\var{L_n}-\frac{2(d-1)}{(2d-1)^2}, 
\end{align*}

which gives the following proposition: 

\bigskip

\begin{prop} \label{variance for number of turns}
Let $\beta(d)$ be the error term in $\var L_n$ as above. Then, 
\begin{align*}
\lim_{n \rightarrow \infty}(\EE T_n - \frac{2(d-1)^2}{d(2d-1)}n)=-\frac{2d^{2}-4d+1}{d(2d-1)},
\end{align*}
and
\begin{align*}
\lim_{n \rightarrow \infty}(\var{T_n} - \frac{2(d-1)^{2}(5d-2)}{d^{2}(2d-1)^{2}}n)= \frac{4(d-1)^2}{(2d-1)^2}{\beta(d)}-\frac{2d^{2}-4d+1}{d(2d-1)^{2}}. \\
\end{align*}
\end{prop}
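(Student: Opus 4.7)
The plan is to reduce this proposition to the asymptotics for $L_n$ already established in Lemma 2.1 and Proposition 2.4, by leveraging the conditional binomial structure $T_n \mid L_n \sim \mathrm{Bin}(L_n-1, p)$ with $p=\tfrac{2d-2}{2d-1}$ noted at the start of Section 2. This structure encodes the fact that in a reduced word, each consecutive pair of letters is an independent uniform draw among the $2d-1$ non-canceling possibilities, $2d-2$ of which produce a turn.

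With this in hand, the tower property and the conditional variance decomposition yield
\begin{align*}
\EE T_n &= p\,(\EE L_n - 1),\\
\var T_n &= p(1-p)(\EE L_n - 1) + p^{2}\,\var L_n,
\end{align*}
so both the leading and constant terms in $\EE T_n$ and $\var T_n$ become explicit linear combinations of the corresponding terms in $\EE L_n$ and $\var L_n$.

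I would then substitute Lemma 2.1, $\EE L_n = \tfrac{d-1}{d}n + \tfrac{2d-1}{2d(d-1)} + o(1)$, and Proposition 2.4, $\var L_n = \tfrac{2d-1}{d^2}n + \beta(d) + o(1)$. The linear coefficient in $\EE T_n$ is $p\cdot\tfrac{d-1}{d} = \tfrac{2(d-1)^2}{d(2d-1)}$, and the constant term $p\cdot\tfrac{2d-1}{2d(d-1)} - p$ simplifies to $-\tfrac{2d^2-4d+1}{d(2d-1)}$. For the variance, the linear coefficient $p(1-p)\tfrac{d-1}{d} + p^{2}\tfrac{2d-1}{d^2}$ collapses to $\tfrac{2(d-1)^2(5d-2)}{d^2(2d-1)^2}$ after putting over a common denominator, while the constant term combines $\tfrac{4(d-1)^2}{(2d-1)^2}\beta(d)$ coming from $p^{2}\var L_n$ with $-\tfrac{2d^2-4d+1}{d(2d-1)^2}$ from the remaining pieces.

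The main obstacle, such as it is, is purely bookkeeping: all the real probabilistic content (the tight control of $\cov(S_n,D_n)$ in Lemma 2.2 and of the $O(1)$ corrections to $\EE L_n$ and $\var L_n$) has already been done in Section 2, so this step is a mechanical algebraic consequence. The one simplification worth double-checking is the cancellation $p\cdot\tfrac{2d-1}{2d(d-1)} - p = -\tfrac{2d^2-4d+1}{d(2d-1)}$, since the same expression re-appears (divided by an extra factor $2d-1$) in the constant term for $\var T_n$.
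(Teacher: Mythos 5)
Your proposal is correct and matches the paper's own proof: both condition on $L_n$, use the binomial structure $T_n\mid L_n\sim\mathrm{Bin}(L_n-1,p)$ with $p=\tfrac{2d-2}{2d-1}$ to write $\EE T_n=p(\EE L_n-1)$ and $\var T_n=p(1-p)(\EE L_n-1)+p^2\var L_n$, and then substitute the $O(1)$ asymptotics for $\EE L_n$ and $\var L_n$ from Lemma 2.1 and Proposition 2.4. The algebraic simplifications you flag check out.
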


\bigskip

\section{Large deviations}

The goal of this section is to prove the following large deviations theorem for $\frac{T_n}{n}$. 

\bigskip

\begin{thm} \label{large deviations for turns}
The sequence of the laws for the random variables $\{\frac{T_n}{n}\}_{n \geq 1}$ satisfy the large deviations principle with rate function
\begin{align*}
I(x) = \sup_{\theta} [\theta x - \log h(\theta)], 
\end{align*}
where
\begin{align*} 
h(\theta) = \left \{
\begin{array}{rl}
& \frac{1}{2d} [2(d-1)e^{\theta} + \frac{2d-1}{1 + 2(d-1)e^{\theta}} + 1], \theta \geq \log \frac{\sqrt{2d-1}-1}{2(d-1)} \\
&\frac{\sqrt{2d-1}}{d}, \qquad \qquad \qquad \qquad \qquad                 \theta < \log \frac{\sqrt{2d-1}-1}{2(d-1)}
\end{array} \right., 
\end{align*}
\end{thm}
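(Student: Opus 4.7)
The plan is to apply the G\"artner--Ellis theorem, which reduces the LDP for $T_n/n$ to computing the scaled limit $\Lambda(\theta) := \lim_{n \to \infty} n^{-1} \log \EE[e^{\theta T_n}]$ and showing it equals $\log h(\theta)$.

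\textbf{Step 1 (Conditioning on $L_n$).} Since $T_n \mid L_n \sim \mathrm{Binomial}(L_n - 1, (2d-2)/(2d-1))$, a direct expansion yields
\[
\EE[e^{\theta T_n}] = \frac{1}{\phi(\theta)}\, \EE\bigl[\phi(\theta)^{L_n}\bigr], \qquad \phi(\theta) := \frac{1 + 2(d-1)e^{\theta}}{2d-1}.
\]
So it suffices to compute $\Lambda_L(s) := \lim_n n^{-1} \log \EE[e^{s L_n}]$ and evaluate at $s = \log \phi(\theta)$.

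\textbf{Step 2 (LDP for $L_n / n$).} I would use the coupling $L_n = S_n + D_n$ of Section~2 together with the estimate $D_n \leq 2 R_{n-1}$ and the stochastic domination of $R_{n-1}$ by a geometric random variable, which gives $\PP(D_n > \epsilon n) \leq C e^{-c n}$. Thus $L_n/n$ and $S_n/n$ are exponentially equivalent, and since $L_n \geq 0$ almost surely, Cram\'er's theorem for the i.i.d.\ sum $S_n/n$ transfers to give an LDP for $L_n/n$ with rate
\[
I_L(x) = I_S(x) \text{ for } x \geq 0, \quad I_L(x) = +\infty \text{ for } x < 0,
\]
where $I_S(x) = \sup_\theta[\theta x - \log m(\theta)]$ with $m(\theta) = \frac{2d-1}{2d}e^{\theta} + \frac{1}{2d}e^{-\theta}$. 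The boundary rate $I_L(0) = I_S(0) = \log(d/\sqrt{2d-1})$ agrees with the exponential decay $\PP(L_n = 0) \asymp (\sqrt{2d-1}/d)^n$ for the return probability of the positively drifting walk.

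\textbf{Step 3 (Varadhan and the two regimes).} Since $L_n \leq n$, the moment condition for Varadhan's lemma is trivial, and
\[
\Lambda_L(s) = \sup_{x \geq 0}\bigl[\, s x - I_S(x)\,\bigr].
\]
Let $s^\ast := I_S'(0^+) = -\frac{1}{2}\log(2d-1)$. For $s \geq s^\ast$ the unconstrained optimizer lies in $[0,\infty)$, so the constraint is inactive and $\Lambda_L(s) = \log m(s)$. For $s < s^\ast$ the function $s x - I_S(x)$ is decreasing on $[0, \infty)$, so the supremum is attained at the boundary $x = 0$ and equals $-I_S(0) = \log(\sqrt{2d-1}/d)$. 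Setting $s = \log \phi(\theta)$, the threshold becomes $\theta = \log\frac{\sqrt{2d-1}-1}{2(d-1)} =: \theta^\ast$; and a direct algebraic rearrangement of $m(\log \phi(\theta)) = \frac{2d-1}{2d}\phi(\theta) + \frac{1}{2d\phi(\theta)}$ reproduces the first branch of $h(\theta)$.

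\textbf{Step 4 (G\"artner--Ellis).} Finally I verify essential smoothness of $\Lambda = \log h$ on $\RR$: $h$ is finite and positive everywhere, and the seam at $\theta^\ast$ is $C^1$ because $\psi(\phi) := \frac{2d-1}{2d}\phi + \frac{1}{2d\phi}$ attains its minimum exactly at $\phi = 1/\sqrt{2d-1}$, so $\psi'(\phi^\ast) = 0$, matching the vanishing derivative of the constant branch from the left. G\"artner--Ellis then produces the LDP for $T_n/n$ with rate $I(x) = \sup_\theta[\theta x - \log h(\theta)]$.

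The main obstacle is Step~3, specifically recognizing and justifying the boundary-attained regime $\theta < \theta^\ast$. Probabilistically this corresponds to a switch in the dominant behavior: for $\theta \geq \theta^\ast$ the tilted measure favors $L_n$ of order $n$ (the typical Cram\'er regime), while for $\theta < \theta^\ast$ it favors $L_n = O(1)$ so that the exponential rate is set by the return probability $\mu = \sqrt{2d-1}/d$ rather than by the i.i.d.\ MGF $\psi$. Capturing this phase transition cleanly, and verifying the matching $\Lambda'(\theta^{\ast-}) = \Lambda'(\theta^{\ast+}) = 0$ required for G\"artner--Ellis, are the technical crux.
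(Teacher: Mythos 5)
Your overall architecture — conditioning on $L_n$ to reduce to the Laplace transform of $L_n$, and then applying G\"artner--Ellis — is the same as the paper's, and your Step~1 algebra (with $\phi(\theta)=\frac{1+2(d-1)e^{\theta}}{2d-1}$) is correct. Steps 3 and 4 are also sound given the input, and your verification of the $C^1$ matching of the two branches of $h$ at $\theta^\ast$ is a nice point that the paper leaves implicit.

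The gap is in Step~2, and it is substantive: $L_n/n$ and $S_n/n$ are \emph{not} exponentially equivalent. Exponential equivalence in the sense of Dembo--Zeitouni requires $\limsup_n \frac1n\log\PP(|L_n/n-S_n/n|>\delta)=-\infty$ for every $\delta>0$, i.e.\ super-exponential decay. The geometric bound on $D_n=L_n-S_n$ gives only $\PP(D_n>\delta n)\lesssim (2d-1)^{-\delta n/2}$, so $\frac1n\log\PP(D_n>\delta n)$ tends to the finite constant $-\tfrac{\delta}{2}\log(2d-1)$, not to $-\infty$. Indeed, exponential equivalence would force the two sequences to share a rate function, yet you yourself (correctly) assign them different rate functions: $I_L\equiv+\infty$ on $(-\infty,0)$ while $I_S$ is finite on $(-1,0)$. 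Your statement of $I_L$ happens to be the right answer, but it does not follow from Cram\'er plus exponential equivalence. Establishing the LDP for $L_n/n$ — in particular, that the good rate function agrees with $I_S$ on $[0,\infty)$, or equivalently that $\lim_n(\EE w^{L_n})^{1/n}$ equals $\frac{2d-1}{2d}w+\frac{1}{2dw}$ for $w>1/\sqrt{2d-1}$ and flattens to $\frac{\sqrt{2d-1}}{d}$ for smaller $w$ — is precisely where the paper does its real work (its Propositions 3.2 and 3.6, with Lemmas 3.4 and 3.5 supplying the lower bound via a reflection-principle estimate of $\PP(\min_k S_k<-\delta n\mid S_n\approx\alpha n)$). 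This is the step you need to fill in; the phase transition you flag in Step~3 is then a straightforward Legendre-transform consequence of that input, not the crux.
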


\bigskip

We postpone the proof of this theorem to the end of the section. In light of Gartner-Ellis theorem (see \cite{Dembo} section 2.3), it suffices to show that
\begin{align*}
\lim_{n \rightarrow +\infty} (\EE e^{\theta T_n})^{\frac{1}{n}} = h(\theta)
\end{align*}
for every $\theta \in \RR$, and the limit $h$ is essentially smooth. 

Note that $T_{n} | L_{n}$ has a binomial distribution with parameter $(\frac{2d-2}{2d-1}, L_{n}-1)$ for $L_{n} \geq 1$, and $T_{n} = 0$ if $L_{n} = 0$, we have
\begin{align} \label{moment generating function for turns}
\EE e^{\theta T_{n}} = \PP(L_{n} = 0) + \PP(L_{n} > 0) \EE w(\theta)^{L_{n} -1}, 
\end{align}
where $w(\theta) = \frac{2d-2}{2d-1}e^{\theta} + 1$. The first term is bounded by
\begin{align*}
\PP(L_{n} = 0) &\leq \sum_{k=0}^{n} \PP(S_{n} = -k) \\
&\leq C (\frac{\sqrt{2d-1}}{d})^{n}, 
\end{align*}
and we need to estimate $\EE w(\theta)^{L_{n}}$ for large $n$.

In the context below, we regard $w$ to be a positive real number independent of $\theta$, and study the asymptotics of $(\EE w^{L_{n}})^{\frac{1}{n}}$ as $n \rightarrow +\infty$. 

\bigskip

\begin{prop} \label{the limit for larger w}
If $w \geq 1$, then we have
\begin{align*}
\lim_{n \rightarrow +\infty} (\EE w^{L_{n}})^{\frac{1}{n}} = \frac{2d-1}{2d}w + \frac{1}{2d} \cdot \frac{1}{w}. 
\end{align*}
\end{prop}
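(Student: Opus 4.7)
My plan is to obtain matching upper and lower bounds on $g_n(w) := \EE w^{L_n}$ differing only by a multiplicative constant, so that $g_n(w)^{1/n}$ converges to $\phi(w) := \frac{2d-1}{2d} w + \frac{1}{2d} w^{-1}$.

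For the lower bound, I would use the coupling $L_n = S_n + D_n$ from Section 2, in which $D_n \ge 0$. Since $w \ge 1$ makes $x \mapsto w^x$ nondecreasing, this gives $w^{L_n} \ge w^{S_n}$, and taking expectations yields $g_n(w) \ge (\EE w^{X_1})^n = \phi(w)^n$; hence $\liminf_{n \to \infty} g_n(w)^{1/n} \ge \phi(w)$.

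For the matching upper bound I would derive a one-step recursion for $g_n$ by conditioning on $L_n$. On $\{L_n > 0\}$ the increment is the unconstrained $X_{n+1}$, contributing $\phi(w)\, \EE[w^{L_n} 1_{L_n > 0}]$. On $\{L_n = 0\}$ the process moves deterministically to $1$, contributing $w\,\PP(L_n = 0)$ rather than the $\phi(w)$ that the interior formula would predict. Collecting terms yields
\[
g_{n+1}(w) \;=\; \phi(w)\, g_n(w) \;+\; \frac{w - w^{-1}}{2d}\, \PP(L_n = 0),
\]
and iterating with $g_0(w) = 1$ gives
\[
g_n(w) \;=\; \phi(w)^n \;+\; \frac{w - w^{-1}}{2d} \sum_{k=0}^{n-1} \phi(w)^{n-1-k}\, \PP(L_k = 0).
\]
Now I would feed in the exponential bound $\PP(L_k = 0) \le C\rho^k$ with $\rho = \sqrt{2d-1}/d$, which was established in the paragraph just preceding \eqref{moment generating function for turns}. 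Because $\phi(w) \ge \phi(1) = 1 > \rho$ for every $w \ge 1$, the correction sum is dominated by $\phi(w)^n$ times a convergent geometric series in $\rho/\phi(w)$. This gives $g_n(w) \le C'(w,d)\, \phi(w)^n$, hence $\limsup_{n \to \infty} g_n(w)^{1/n} \le \phi(w)$.

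The step that most requires care is the recursion itself: one must correctly account for the reflection of $L_n$ at the origin, which is precisely what produces the correction term $\tfrac{w - w^{-1}}{2d}\PP(L_n=0)$. Once that formula is in hand, the rest is a routine geometric-sum estimate powered by the exponential decay of $\PP(L_k=0)$ already proved in Section 2. The restriction $w \ge 1$ enters in two places: it guarantees the monotonicity used in the lower bound, and it ensures the strict separation $\phi(w) > \rho$ needed to sum the correction.
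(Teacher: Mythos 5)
Your proof is correct and follows essentially the same route as the paper: both hinge on the one-step recursion $\EE w^{L_{n+1}} = \bigl(\tfrac{2d-1}{2d}w+\tfrac{1}{2dw}\bigr)\EE w^{L_n} + \tfrac{1}{2d}(w-\tfrac{1}{w})\PP(L_n=0)$ and its iteration, with the sign of the correction term for $w\ge 1$ giving the lower bound. The only (immaterial) differences are that you get the lower bound from the coupling $L_n\ge S_n$ and sharpen the upper bound to a constant multiple of $\phi(w)^n$ via the exponential decay of $\PP(L_k=0)$, whereas the paper simply bounds $\PP(L_k=0)\le 1$ and absorbs a polynomial factor in the $n$-th root.
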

\begin{proof}
We compare the difference between $\EE w^{L_{n+1}}$ and $\EE w^{L_n}$: 
\begin{align*}
\EE w^{L_{n+1}} &= \EE 1_{\{L_n=0\}}w^{L_{n+1}} + \EE 1_{\{L_n>0\}}w^{L_{n+1}}\\
&= w \PP(L_n=0) + \EE 1_{\{L_n>0\}} w^{L_{n}+X_{n+1}}\\
&= w \PP(L_n=0) + \EE w^{X_{n+1}} \EE 1_{\{L_n>0\}} w^{L_n}\\
&= w \PP(L_n=0) + \EE w^{X_{n+1}} \EE w^{L_n} - \EE w^{X_{n+1}} \PP(L_n=0)\\&= (\frac{2d-1}{2d}w+\frac{1}{2d}\frac{1}{w})\EE w^{L_n} + \frac{1}{2d}(w-\frac{1}{w})\PP(L_n=0)
\end{align*}

Let $x_n=\EE w^{L_{n}}, a=\frac{2d-1}{2d}w+\frac{1}{2d}\frac{1}{w}, b=\frac{1}{2d}(w-\frac{1}{w}), p_n=\PP(L_n=0)$, we have the following recursive relation: 
\begin{align*}
x_n=a x_{n-1}+bp_{n-1}
\end{align*}

Since $x_1=w$, adding them up yields: 
\begin{align*}
x_n=a^{n-1}w+b(a^{n-2}p_{1}+a^{n-3}p_{2}+\cdots+ap_{n-2}+p_{n-1})
\end{align*}

For $w \geq 1$, we have $b \geq 0$. In this case, since $a^{n-1} \leq x_n \leq na^{n-1}$ from the expression above, we get: 
\begin{align*}
\lim_{n \rightarrow \infty}{(x_n)^{\frac{1}{n}}}= a=\frac{2d-1}{2d}w+\frac{1}{2d}\frac{1}{w}, 
\end{align*}
thus proving the proposition. 
\end{proof}

The situation for $w \in (0,1)$ is more involved. We prove it based on comparison with $(\EE w^{S_n})^{\frac{1}{n}}$. Note that $S_n$ is a sum of i.i.d., so by Cramer's theorem, it satisfies large deviations principle with rate function
\begin{align*}
J(x) = \sup_{\theta} [\theta x - \log(\frac{2d-1}{2d} e^{\theta} + \frac{1}{2d} e^{-\theta})]. 
\end{align*}

\bigskip

\begin{lem}
For any $w>0$, the equation
\begin{align} \label{equation for the position variable}
w^{\alpha} e^{-J(\alpha)} = \frac{2d-1}{2d}w + \frac{1}{2d} \cdot \frac{1}{w}
\end{align}
has a unique solution at $\alpha^{*} = \frac{(2d-1)w^{2} - 1}{(2d-1)w^{2} + 1}$. Furthermore, $\alpha^{*}$ is the global maximizer for
\begin{align*}
f_{w}(\alpha) = w^{\alpha}e^{-J(\alpha)}. 
\end{align*}
\end{lem}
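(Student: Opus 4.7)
The plan is to exploit convex duality between $J$ and the log-moment generating function $\Lambda(\theta) = \log\bigl(\frac{2d-1}{2d}e^{\theta} + \frac{1}{2d}e^{-\theta}\bigr)$. Since $J$ is the Fenchel--Legendre transform of $\Lambda$ and $\Lambda$ is strictly convex (a log-sum-exp), $J$ is itself strictly convex on the interior $(-1,1)$ of its effective domain, and the pair $(\Lambda,J)$ satisfies the usual duality relations $J'\circ\Lambda' = \id$ and $\alpha\,\theta = \Lambda(\theta) + J(\alpha)$ whenever $\alpha=\Lambda'(\theta)$.

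First I would compute the unique critical point of $f_w$. Taking logarithms,
\begin{align*}
(\log f_w)'(\alpha) = \log w - J'(\alpha),
\end{align*}
so the critical equation is $J'(\alpha) = \log w$. By the duality relation above, this is equivalent to $\alpha = \Lambda'(\log w)$. A direct computation gives
\begin{align*}
\Lambda'(\theta) = \frac{\frac{2d-1}{2d}e^{\theta} - \frac{1}{2d}e^{-\theta}}{\frac{2d-1}{2d}e^{\theta} + \frac{1}{2d}e^{-\theta}},
\end{align*}
and evaluating at $\theta = \log w$ and clearing denominators produces $\alpha^{*} = \frac{(2d-1)w^{2}-1}{(2d-1)w^{2}+1}$, which lies in $(-1,1)$ for every $w>0$.

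Next I would verify the equation \eqref{equation for the position variable} at $\alpha^{*}$ by a clean application of duality. Setting $\theta^{*} = \log w$, so that $\alpha^{*} = \Lambda'(\theta^{*})$, duality gives $J(\alpha^{*}) = \theta^{*}\alpha^{*} - \Lambda(\theta^{*})$, hence
\begin{align*}
\log f_w(\alpha^{*}) = \alpha^{*}\log w - J(\alpha^{*}) = \Lambda(\log w) = \log\!\Bigl(\tfrac{2d-1}{2d}w + \tfrac{1}{2d}\cdot\tfrac{1}{w}\Bigr),
\end{align*}
which is exactly the right-hand side of \eqref{equation for the position variable} after exponentiation.

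Finally, for uniqueness and the maximization claim, strict convexity of $J$ makes $\log f_w(\alpha) = \alpha\log w - J(\alpha)$ strictly concave in $\alpha$, so $\alpha^{*}$ is the unique global maximizer of $f_w$. Since the right-hand side of \eqref{equation for the position variable} equals the maximum value $f_w(\alpha^{*})$, the equation $f_w(\alpha) = f_w(\alpha^{*})$ can only be satisfied at the maximizer itself, giving uniqueness. I do not anticipate a genuine obstacle here; the only mildly delicate point is making sure $\alpha^{*}\in(-1,1)$ so that the Legendre identities apply inside the interior of the domain of $J$, which is immediate from the explicit formula for $\alpha^{*}$.
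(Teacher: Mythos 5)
Your proof is correct, and it reaches the result by a cleaner route than the paper. The paper computes $J(\alpha)$ explicitly (via the maximizer $\theta^{*}(\alpha)$ of $\theta\alpha - \Lambda(\theta)$), then differentiates to get the formula $J'(\alpha)=\tfrac12\bigl[\log\tfrac{1+\alpha}{1-\alpha}-\log(2d-1)\bigr]$, solves $J'(\alpha^{*})=\log w$ by algebra, and finally substitutes back into the explicit $J$ to verify $f_w(\alpha^{*})=\tfrac{2d-1}{2d}w+\tfrac{1}{2d w}$. You instead never write down $J$ at all: you invoke the Legendre duality identities $J'=(\Lambda')^{-1}$ and the Fenchel--Young equality $J(\alpha^{*})=\theta^{*}\alpha^{*}-\Lambda(\theta^{*})$ at $\theta^{*}=\log w$, which makes both the formula for $\alpha^{*}$ and the identity $\log f_w(\alpha^{*})=\Lambda(\log w)$ fall out with almost no computation. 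The two arguments are equivalent in content --- the paper's $J'(\alpha)$ is literally $\theta^{*}(\alpha)=(\Lambda')^{-1}(\alpha)$ --- but your packaging via duality avoids the messy explicit form of $J(\alpha)$ and makes the verification of \eqref{equation for the position variable} immediate, whereas the paper must carry that explicit expression through. The strict convexity of $J$ (inherited from the strict convexity and smoothness of $\Lambda$) handles the global-maximizer and uniqueness claims exactly as you say; one should just note, as you do, that $\alpha^{*}\in(-1,1)$, which is clear from the formula since $\Lambda'(\RR)=(-1,1)$.
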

\begin{proof}
We first give an expression of $J$ in terms of $\alpha$ only. It is clear that $J(\alpha) = +\infty$ for $|\alpha| > 1$. For $\alpha \in [-1,1]$, the maximizer $\theta^{*}$ is
\begin{align*}
\theta^{*}(\alpha) = \frac{1}{2}[\log \frac{1+\alpha}{1-\alpha} - \log(2d-1)], \alpha \in [-1,1], 
\end{align*}
passing to the limit $\pm \infty$ for $\alpha = \pm 1$. Substituting into $J$, we have
\begin{align*}
J(\alpha) = \frac{1}{2} \alpha [\log \frac{1+\alpha}{1-\alpha} - \log(2d-1)] - \log \frac{\sqrt{2d-1}}{2d} - \log(\sqrt{\frac{1+\alpha}{1-\alpha}} + \sqrt{\frac{1-\alpha}{1+\alpha}})
\end{align*}
for $\alpha \in [-1,1]$. Differentiating with respect to $\alpha$, we obtain
\begin{align} \label{derivative of J}
J'(\alpha) = \frac{1}{2}[\log \frac{1+\alpha}{1-\alpha} - \log(2d-1)]
\end{align}
for $\alpha \in (-1,1)$. Note that
\begin{align*}
f_{w}'(\alpha) = \frac{d}{d \alpha}(w^{\alpha} e^{-J(\alpha)}) = w^{\alpha} e^{- J(\alpha)} (\log w - J'(\alpha)), 
\end{align*}
and since $J$ is convex, $f_{w}$ has the global maximizer $\alpha^{*}$ satisfying
\begin{align*}
J'(\alpha^{*}) = \log w. 
\end{align*}
By \eqref{derivative of J}, solving the above first order condition yieds
\begin{align*}
\alpha^{*} = \frac{(2d-1)w^{2} - 1}{(2d-1)w^{2} + 1}, 
\end{align*}
and thus
\begin{align*}
f_{w}(\alpha^{*}) = \frac{2d-1}{2d}w + \frac{1}{2d} \cdot \frac{1}{w}. 
\end{align*}
Since $\alpha^{*}$ is the global maximier of $f_{w}$, we conclude that equation \eqref{equation for the position variable} has a unique solution at $\alpha^{*}$. 
\end{proof}

\bigskip

\begin{prop}
Let $\alpha^{*} = \alpha^{*}(w) = \frac{(2d-1)w^{2} - 1}{(2d-1)w^{2} + 1}$ be as above, then
\begin{align*}
\lim_{\epsilon \downarrow 0} \lim_{n \rightarrow +\infty} (\EE w^{S_n} 1_{\{\frac{S_n}{n} \in (\alpha^{*}-\epsilon, \alpha^{*}+\epsilon)\}}) = \frac{2d-1}{2d}w + \frac{1}{2d} \cdot \frac{1}{w}. 
\end{align*}
\end{prop}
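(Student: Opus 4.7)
The plan is to proceed via the Cram\'er exponential change of measure, which converts the weighted expectation on the left-hand side into an ordinary probability under a tilted law, where the strong law of large numbers applies directly. Reading the statement with the standard $(\,\cdot\,)^{1/n}$ exponent (as in the preceding proposition), the identity to be shown is that the $n$-th root tends to the displayed value.

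Setting $\theta = \log w$ and $M(\theta) = \EE e^{\theta X_1} = \frac{2d-1}{2d}w + \frac{1}{2d}\cdot\frac{1}{w}$, I would introduce the tilted law $\tilde\PP$ under which the $X_i$ remain i.i.d.\ with $\tilde\PP(X_i=x) = e^{\theta x}\PP(X_i=x)/M(\theta)$. A direct computation gives $\tilde\EE[X_1] = M'(\theta)/M(\theta)$, and this equals $\alpha^*(w) = \frac{(2d-1)w^2-1}{(2d-1)w^2+1}$; indeed, this is precisely the Legendre dual of the first-order condition $J'(\alpha^*) = \log w$ already derived in the preceding lemma. The product structure of $(X_i)$ then yields the exact identity
\begin{equation*}
\EE\bigl[w^{S_n}\mathbf{1}_{\{S_n/n \in A\}}\bigr] \;=\; M(\theta)^n\,\tilde\PP(S_n/n \in A)
\end{equation*}
for any Borel $A \subseteq \RR$. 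By the strong law of large numbers under $\tilde\PP$, one has $S_n/n \to \alpha^*$ almost surely, so $\tilde\PP(S_n/n \in (\alpha^*-\epsilon, \alpha^*+\epsilon)) \to 1$ for every fixed $\epsilon > 0$. Taking $n$-th roots in the displayed identity gives
\begin{equation*}
\bigl(\EE[w^{S_n}\mathbf{1}_{\{S_n/n \in (\alpha^*-\epsilon,\alpha^*+\epsilon)\}}]\bigr)^{1/n} \;=\; M(\theta)\cdot \tilde\PP(S_n/n \in (\alpha^*-\epsilon,\alpha^*+\epsilon))^{1/n} \;\longrightarrow\; M(\theta),
\end{equation*}
and since the inner limit already equals $M(\theta)$ independently of $\epsilon$, the outer limit as $\epsilon \downarrow 0$ is trivial.

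There is no substantial obstacle: the only computation of note is verifying $M'(\theta)/M(\theta) = \alpha^*(w)$, which is routine and is really just the Legendre duality between $\log M$ and $J$ packaged in a form we can use. An alternative route would be to bound $\EE[w^{S_n}\mathbf 1_{\{S_n/n \in A\}}] = \sum_{k/n \in A} w^k \PP(S_n = k)$ above and below using Cram\'er's LDP bounds on $\PP(S_n = k)$, optimize $w^\alpha e^{-J(\alpha)}$ over the small interval around $\alpha^*$, and invoke continuity of $f_w$; this works but is more cumbersome, requires sharper (local) LDP estimates for the lower bound, and ultimately re-derives the tilting identity. The change-of-measure argument is cleaner and bypasses these issues entirely.
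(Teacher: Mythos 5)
The paper states this proposition without proof; it appears only as motivational commentary ahead of the subsequent lemma and Proposition~3.6, so there is no textual argument to compare against. You are right that the displayed left-hand side is missing an $n$-th root: as printed, $\EE\, w^{S_n}\mathbf 1_{\{S_n/n\in(\alpha^*-\epsilon,\alpha^*+\epsilon)\}}$ grows or decays geometrically in $n$ and cannot converge to the stated right-hand side for general $w$, and the $(\,\cdot\,)^{1/n}$ reading is the only one consistent with the surrounding Cram\'er-theorem machinery. Your exponential tilting argument is correct and complete: the exact identity $\EE[w^{S_n}\mathbf 1_{\{S_n/n\in A\}}]=M(\theta)^n\,\tilde\PP(S_n/n\in A)$, the verification that $\tilde\EE[X_1]=M'(\theta)/M(\theta)=\alpha^*$ (which is precisely the Legendre dual of the first-order condition $J'(\alpha^*)=\log w$ from the preceding lemma), and the law of large numbers under $\tilde\PP$ give the claim at once. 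The weak law already suffices, since once $\tilde\PP(S_n/n\in(\alpha^*-\epsilon,\alpha^*+\epsilon))$ exceeds a fixed positive constant its $n$-th root tends to $1$. This change-of-measure route is cleaner than the one the paper's setup points toward---bounding $\sum_k w^k\,\PP(S_n=k)$ by Cram\'er upper and lower estimates and optimizing $f_w(\alpha)=w^\alpha e^{-J(\alpha)}$ over the shrinking interval---which requires local LDP lower bounds and ultimately repackages the same tilting computation; your version makes the role of $\alpha^*$ as the tilted mean transparent and avoids that bookkeeping.
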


\bigskip

This proposition shows that the major contribution for $\EE w^{S_n}$ are from the $S_n$'s with values near $\alpha^{*} n$.

\bigskip

\begin{lem}
(a) Let $\alpha > 0$. $\forall \epsilon \in (0,\alpha)$, $\forall \delta > 0$, $\exists N = N(\alpha, \epsilon, \delta)$ such that
\begin{align*}
\PP(L_{n} - S_{n} \leq \delta n | \frac{S_n}{n} \in (\alpha - \epsilon, \alpha + \epsilon)) \geq \frac{1}{2} 
\end{align*}
for all $n \geq N$. \\
(b) $\forall \epsilon, \delta > 0$, $\exists N = N(\epsilon, \delta)$ such that
\begin{align*}
\PP(L_{n} \leq \delta n | S_{n} \leq - \epsilon n) \geq \frac{1}{2}
\end{align*}
for all $n \geq N$. 
\end{lem}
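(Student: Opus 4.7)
The two claims require somewhat different methods.

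For part (a), the key observation is that $D_n := L_n - S_n$ changes only at times $i$ with $L_i = 0$, with increment $1 - X_{i+1} \in \{0, 2\}$. Hence, letting $\tau := \max\{i \leq n-1 : L_i = 0\}$ (which exists since $L_0 = 0$), we have $D_n = D_{\tau+1} \leq 2(\tau + 1)$, so $\{D_n \leq \delta n\} \supseteq \{\tau \leq \delta n/2 - 1\}$. It thus suffices to show $\PP(\tau \geq m \mid B_n) \leq 1/2$ for $m := \lceil \delta n/2 \rceil$, where $B_n := \{S_n/n \in (\alpha - \epsilon, \alpha + \epsilon)\}$. I would decompose $\PP(\tau \geq m, B_n) = \sum_{k=m}^{n-1} \PP(\tau = k, B_n)$ and apply the strong Markov property at $k$, bounding each summand by $\PP(L_k = 0)$ times the probability that a fresh walk of length $n - k$ has displacement near $\alpha n$ (while remaining appropriately nonnegative). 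Cramer's theorem gives $\PP(L_k = 0) \leq \PP(S_k \leq 0) \leq C e^{-k J(0)}$, and the second factor is $\lesssim e^{-(n-k) J(\alpha n/(n-k))}$. A Legendre--duality computation, with $\Lambda(\theta) := \log \EE e^{\theta X}$ and $\theta^*(\alpha) := J'(\alpha)$, then shows the combined exponent is $-n J(\alpha) - k\, c(\alpha) + o(n)$, where
\[
c(\alpha) \;:=\; J(0) + \Lambda(\theta^*(\alpha)) \;=\; \Lambda(\theta^*(\alpha)) - \Lambda(\theta^*(0)) \;>\; 0 \quad \text{for all } \alpha > 0,
\]
positivity following from strict convexity of $\Lambda$ and the fact that $\theta^*(0)$ is its unique minimizer. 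Summing, $\PP(\tau \geq m, B_n) \lesssim e^{-n J(\alpha) - m\,c(\alpha)}$, which combined with the Bahadur--Rao lower bound $\PP(B_n) \gtrsim n^{-1/2} e^{-n J(\alpha)}$ yields $\PP(\tau \geq m \mid B_n) \lesssim n^{1/2} e^{-m\,c(\alpha)} \to 0$, much stronger than the required $1/2$.

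For part (b), I would use an exponential change of measure. Let $\theta^* := \theta^*(-\epsilon) < 0$ and let $P^*$ be the tilted law under which $\EE^* X = -\epsilon$. Under $P^*$ the walk has negative drift, so $L$ is positive recurrent with explicit geometric stationary distribution of ratio $\rho := (1-\epsilon)/(1+\epsilon) \in (0,1)$, and stochastic domination by the stationary law gives $\PP^*(L_n > \delta n) \leq C \rho^{\delta n}$ uniformly in $n$. A direct sign check on $\{S_n \leq -\epsilon n\}$ shows $dP/dP^* = e^{-\theta^* S_n + n \Lambda(\theta^*)} \leq e^{-n J(-\epsilon)}$ there, whence
\[
\PP(L_n > \delta n, \, S_n \leq -\epsilon n) \;\leq\; e^{-n J(-\epsilon)} \, \PP^*(L_n > \delta n) \;\lesssim\; e^{-n J(-\epsilon)} \rho^{\delta n}.
\]
Dividing by the Cramer lower bound $\PP(S_n \leq -\epsilon n) \gtrsim n^{-1/2} e^{-n J(-\epsilon)}$ gives a conditional probability $\lesssim n^{1/2} \rho^{\delta n} \to 0$.

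The main obstacle I anticipate is in part (a): the Taylor expansion for $(n-k) J(\alpha n/(n-k))$ degrades when $k$ approaches $(1-\alpha) n$, where the fresh walk has barely enough room to reach displacement $\alpha n$. I would handle this by splitting the $k$-sum at $k^\dagger := n - \alpha n\, d/(d-1)$, the minimizer of $t \mapsto t J(\alpha n / t)$, applying the expansion on $[m, k^\dagger]$ and using convexity of $J$ to show that the tail $k \in (k^\dagger, n-1)$ contributes a term exponentially smaller than $\PP(B_n)$. The analogous nontrivial input in part (b) is uniform geometric ergodicity of $L$ under $P^*$, but this is standard given the explicit stationary law (e.g.\ by a monotone coupling with a chain started at infinity).
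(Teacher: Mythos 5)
Part (b) of your proposal is sound and takes a genuinely different route from the paper, which proves (b) by the same bridge/reflection argument as (a) and omits the details. Your change of measure to drift $-\epsilon$, the geometric tail of $L_n$ under the tilted law (e.g.\ via $L_n = S_n - 2\min_{k\le n}S_k$), and the sign check on the likelihood ratio all go through; this is a clean alternative.

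Part (a), however, has a genuine gap in the central estimate. You bound $\PP(\tau=k, B_n)$ by $\PP(L_k=0)\cdot\PP\bigl(S_{n-k}\gtrsim \alpha n\bigr)$ and assign the second factor the exponent $-(n-k)J\bigl(\alpha n/(n-k)\bigr)$. The Chernoff bound $\PP(S_m\ge a)\le e^{-mJ(a/m)}$ is valid only when $a/m$ is at least the mean $\mu=\tfrac{d-1}{d}$ of $X$; for $a/m<\mu$ the optimal Chernoff parameter is $\theta=0$, the true probability is $\Theta(1)$, and yet $J(a/m)>0$, so the claimed bound is simply false there. That regime is exactly $k<k^\dagger=n\bigl(1-\alpha d/(d-1)\bigr)$ — the very range on which you propose to "apply the expansion" — and it is nonempty for $k\ge m=\lceil\delta n/2\rceil$ whenever $\alpha<\mu$ and $\delta$ is small. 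There your summand bound degenerates to $Ce^{-kJ(0)}$, the sum over $k\ge m$ is $\asymp e^{-\delta nJ(0)/2}$, and after dividing by $\PP(B_n)\asymp n^{-1/2}e^{-nJ(\alpha)}$ you need $\delta J(0)/2>J(\alpha)$, which fails for small $\delta$ whenever $0<\alpha<\mu$. Note the lemma is invoked in the paper with $\alpha=\alpha^*(w)\in(0,\mu)$ and $\delta\downarrow 0$, so this is not a corner case. The root cause is that decoupling at the last zero discards the negative correlation between $\{S_k\le 0\}$ and $\{S_n\approx\alpha n\}$: given $B_n$ the path has effective drift $\approx\alpha>0$, which is why $\{S_k\le 0\}$ is exponentially unlikely in $k$, but a product of unconditional probabilities cannot see this. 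The repair is to keep the conditioning: either tilt the whole path by $\theta^*(\ell/n)$ after conditioning on $S_n=\ell$ (the bridge law is tilt-invariant, so $\PP(S_k\le 0\mid S_n=\ell)=\PP_{\theta^*}(S_k\le 0\mid S_n=\ell)$, which a local CLT bounds by $Cn^{1/2}e^{-ck}$), or do what the paper does: write $D_n=2|\min_{k\le n}S_k|$, observe that conditionally on $S_n=\ell$ the steps are exchangeable so the conditional law is that of a simple random walk bridge independent of $d$, and use the reflection principle plus Stirling to show $\PP(\min_{k\le n}S_k<-\delta n\mid S_n\approx\alpha n)\to 0$.
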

\begin{proof}
We prove part(a), and the proof for part (b) is similar. Observe that 
\begin{align*}
D_{n} = L_{n} - S_{n} = 2 |\min_{0 \leq k \leq n} S_{k}|,
\end{align*}
we first consider the quantity $\PP(\min_{1 \leq k \leq n} S_k \geq - \delta n | \frac{S_n}{n} = \alpha n)$, where without loss of generality, we have assumed $\frac{1+\alpha}{2} n$ is an integer, and have replaced $\frac{\delta}{2}$ by $\delta$. Once conditioned on the event $\{\frac{S_n}{n} = \alpha n\}$, all possible paths contain $\begin{pmatrix} n\\ \frac{1+\alpha}{2}n \end{pmatrix}$ positive movements and $\begin{pmatrix} n\\ \frac{1-\alpha}{2} n \end{pmatrix}$ negative movements. Since all these paths have the same (conditional) weight, the quantity $\PP(\min_{1 \leq k \leq n} S_k \geq -\delta n | \frac{S_n}{n} = \alpha n)$ is independent of $d$. Thus, we may assume $d=1$, where all paths are the trajectories of the (conditional) simple symmetric random walk. That is, 
\begin{align*}
\PP(\min_{0 \leq k \leq n}S_{k} < -\delta n | S_{n} = \alpha n) = \PP(\min_{0 \leq k \leq n}\tilde{S}_{k} < -\delta n | \tilde{S}_{n} = \alpha n), 
\end{align*}
where $\tilde{S}_{n}$ is a one-dimensional simple symmetric random walk. By reflection principle, we have
\begin{align*}
\PP(\min_{0 \leq k \leq n}\tilde{S}_{k} < - \delta n, \tilde{S}_{n} = \alpha n) = \PP(\tilde{S}_{n} = - \lfl (\alpha + 2 \delta) \rfl n ). 
\end{align*}

Using Stirling's approximation, we estimate the ratio
\begin{align*}
\frac{\PP(\tilde{S}_{n} = -\lfl (\alpha + 2 \delta)n) \rfl}{\PP(\tilde{S}_{n} = \alpha n)} &= \begin{pmatrix} n\\ \frac{1 - \alpha - 2\delta}{2}n \end{pmatrix} / \begin{pmatrix} n\\ \frac{1+\alpha}{2}n \end{pmatrix} \\
&= (\frac{1-\alpha}{2}n)! (\frac{1+\alpha}{2}n)! / [(\frac{1 - \alpha - 2 \delta }{2}n)!  (\frac{1 + \alpha + 2\delta}{2}n)!] \\
&\approx [\frac{(1-\alpha)^{1-\alpha}(1+\alpha)^{1+\alpha}}{(1 - \alpha - 2\delta)^{1 - \alpha - 2\delta} (1 + \alpha + 2\delta)^{1 + \alpha + 2\delta}}]^{\frac{n}{2}}. 
\end{align*}

It is straightforward to check that $(1-x)\log(1-x) + (1+x)\log(1+x)$ is increasing in $(0,1)$, thus the quantity in the bracket in the last line is strictly less than one, and the ratio decays exponentially. So, we have
\begin{align*}
&\PP(\min_{0 \leq k \leq n}S_{k} < -\delta n | \frac{S_n}{n} \in (\alpha - \epsilon, \alpha + \epsilon)) \\
&= \sum_{\beta \in (\alpha - \epsilon, \alpha + \epsilon)} \PP(\min_{1 \leq k \leq n}\tilde{S}_{k} < -\delta n, \frac{\tilde{S}_{n}}{n} \in (\alpha - \epsilon, \alpha + \epsilon)) / \PP(\frac{\tilde{S}_{n}}{n} \in (\alpha - \epsilon, \alpha + \epsilon)) \\
&\leq n \sup_{\beta \in (\alpha - \epsilon, \alpha + \epsilon)} \PP(\tilde{S}_{n} = -(\beta + 2 \delta) n ) / \PP(\frac{\tilde{S}_{n}}{n} \in (\alpha - \epsilon, \alpha + \epsilon)) \\
&\rightarrow 0, 
\end{align*}
and consequently
\begin{align*}
\PP(\min_{0 \leq k \leq n} \geq -\delta n | \frac{S_n}{n} \in (\alpha - \epsilon, \alpha + \epsilon)) \rightarrow 1 
\end{align*}
as $n \rightarrow +\infty$, thus proving (a). The proof for part (b) is essentially the same, and we omit it here.

\end{proof}

\bigskip

\begin{prop} \label{the limit for smaller w}
For $w \in (0,1)$, the limit $(\EE w^{L_n})^{\frac{1}{n}}$ exists as $n \rightarrow +\infty$, and we have
\begin{align*} 
\lim_{n \rightarrow +\infty} (\EE w^{L_n})^{\frac{1}{n}} = \left \{
\begin{array}{rl}
&\frac{2d-1}{2d} w + \frac{1}{2d} \cdot \frac{1}{w}, \qquad w \in (\frac{1}{\sqrt{2d-1}}, 1)\\
&\frac{\sqrt{2d-1}}{d}, \qquad \qquad \qquad w \in (0, \frac{1}{\sqrt{2d-1}}]
\end{array} \right., 
\end{align*}
\end{prop}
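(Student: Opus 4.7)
The plan is to sandwich $\EE w^{L_n}$ between explicit functionals of the i.i.d.\ walk $S_n$, to which Cramer's theorem and Varadhan's lemma apply, and then to identify the resulting variational formula in closed form. The key inputs are (i) the inequalities $L_n \geq S_n$ and $L_n \geq |S_n|$, both of which follow from the identity $D_n = -2\min_{0 \leq k \leq n} S_k$ noted in the preceding lemma, and (ii) the explicit form $J'(\alpha) = \frac{1}{2}[\log\frac{1+\alpha}{1-\alpha} - \log(2d-1)]$ of the rate function of $S_n/n$, also computed in the preceding lemma.

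For the upper bound on $(\EE w^{L_n})^{1/n}$, when $w \in (\frac{1}{\sqrt{2d-1}}, 1)$ the bound $L_n \geq S_n$ together with $w < 1$ gives directly $\EE w^{L_n} \leq \EE w^{S_n} = (\frac{2d-1}{2d}w + \frac{1}{2dw})^n$, which already matches the claimed value. For $w \leq \frac{1}{\sqrt{2d-1}}$ this is loose, so I would use $L_n \geq |S_n|$ and apply Varadhan's lemma to the bounded continuous function $\alpha \mapsto |\alpha|\log w$ on $[-1,1]$:
\begin{align*}
\lim_{n \to \infty} \frac{1}{n} \log \EE w^{|S_n|} = \sup_{\alpha \in [-1,1]} [|\alpha|\log w - J(\alpha)].
\end{align*}
A one-sided derivative check at $\alpha = 0$ using the explicit $J'$ shows that for $w \leq \frac{1}{\sqrt{2d-1}}$ the right derivative is $\leq 0$ and the left derivative is $\geq 0$, so the supremum is attained at the kink $\alpha = 0$ with value $-J(0) = \log\frac{\sqrt{2d-1}}{d}$.

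For the lower bound, I would use parts (a) and (b) of the preceding lemma to certify that $L_n$ is close to the achievable part of $S_n$ with positive probability. When $w \in (\frac{1}{\sqrt{2d-1}}, 1)$, set $\alpha^{*}(w) = \frac{(2d-1)w^2 - 1}{(2d-1)w^2 + 1} > 0$; on the event $\{|S_n/n - \alpha^{*}| < \epsilon,\ D_n \leq \delta n\}$ one has $L_n \leq (\alpha^{*} + \epsilon + \delta)n$, so $w^{L_n} \geq w^{(\alpha^{*}+\epsilon+\delta)n}$. Part (a) guarantees this event has probability at least $\tfrac{1}{2}\PP(|S_n/n - \alpha^{*}| < \epsilon) \geq \tfrac{1}{2}e^{-n(J(\alpha^{*}) + o(1))}$ by Cramer, and sending $n \to \infty$ followed by $\epsilon, \delta \to 0$, together with the identity $w^{\alpha^{*}} e^{-J(\alpha^{*})} = \frac{2d-1}{2d}w + \frac{1}{2dw}$ from the preceding lemma, closes the gap. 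For $w \in (0, \frac{1}{\sqrt{2d-1}}]$ the same argument goes through using part (b) on $\{S_n \leq -\epsilon n,\ L_n \leq \delta n\}$, on which $w^{L_n} \geq w^{\delta n}$; Cramer plus continuity of $J$ at $0$ yields $(\EE w^{L_n})^{1/n} \geq w^{\delta} e^{-J(-\epsilon)} \to e^{-J(0)} = \frac{\sqrt{2d-1}}{d}$ as $\epsilon, \delta \to 0$.

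The main obstacle is the small-$w$ regime: the naive bound via $w^{S_n}$ is loose precisely because the exponential tilt concentrates $w^{S_n}$ on trajectories with $S_n/n \approx \alpha^{*}(w) < 0$, which are forbidden by the constraint $L_n \geq 0$. The crucial observation $L_n \geq |S_n|$ "folds" the effective rate function at $\alpha = 0$ and produces the constant plateau $\frac{\sqrt{2d-1}}{d}$ in the statement. Matching the upper and lower bounds at the resulting kink rests on one-sided derivative checks rather than a smooth critical-point analysis, which is straightforward but easy to set up incorrectly.
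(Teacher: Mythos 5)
Your lower bound arguments for both ranges of $w$, and your upper bound for $w\in(\tfrac{1}{\sqrt{2d-1}},1)$, coincide with the paper's proof. Your upper bound in the small-$w$ regime is a genuinely different route: the paper uses monotonicity of $w\mapsto\EE w^{L_n}$ (valid since $L_n\geq 0$) to pass the limit value $\tfrac{\sqrt{2d-1}}{d}$ down from $w=\tfrac{1}{\sqrt{2d-1}}^{+}$, whereas you dominate $w^{L_n}$ by an explicit functional of $S_n$ and appeal to Varadhan's lemma, which has the virtue of explaining the plateau mechanically as a "folding" of the rate function at the origin.

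However, there is a genuine gap in your upper bound: the inequality $L_n\geq |S_n|$ is \emph{false}. You derive it from the identity $D_n=-2\min_{0\leq k\leq n}S_k$ stated in the paper's Lemma 3.5, but that identity is itself incorrect. The process $L$ is not the Skorokhod reflection of $S$: from $L_i=0$ the chain jumps to $L_{i+1}=1$ regardless of $X_{i+1}$, rather than staying at $0$. A concrete counterexample: with $X_1=X_2=-1$ one gets $L_1=1$, $L_2=0$, $S_2=-2$, $D_2=2$, whereas $-2\min_{0\leq k\leq 2}S_k=4$; and $|S_2|=2>L_2=0$ violates $L_n\geq|S_n|$. (The correct relation is the one-sided bound $D_n\leq -2\min_{0\leq k\leq n}S_k$, which is all Lemma 3.5 actually needs.) Fortunately your Varadhan argument survives with the weaker but correct bound $L_n\geq S_n^{+}:=\max(S_n,0)$, which follows trivially from $L_n\geq S_n$ and $L_n\geq 0$: one then dominates $\EE w^{L_n}\leq\EE w^{S_n^{+}}$ and applies Varadhan to $\phi(\alpha)=\alpha^{+}\log w$. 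The sup analysis at $\alpha=0$ is in fact simpler — the left branch of $\phi$ is identically zero, $J$ is decreasing on $(-1,0]$, and for $w\leq\tfrac{1}{\sqrt{2d-1}}$ the right derivative $\log w-J'(0)=\log(w\sqrt{2d-1})\leq 0$ — and yields the same value $-J(0)=\log\tfrac{\sqrt{2d-1}}{d}$. So the approach is sound once the bound is corrected, but as written the key inequality you invoke does not hold.
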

\begin{proof} 

\begin{enumerate}

\item $w \in (\frac{1}{\sqrt{2d-1}},1)$, $\alpha^{*} = \frac{(2d-1)w^{2}-1}{(2d-1)w^{2}+1} > 0$. 

In this case, we have
\begin{align*}
(\EE w ^{L_{n}})^{\frac{1}{n}} &\geq [\EE (w^{L_n} 1_{\{L_{n} - S_{n} \leq \delta n\}} | \frac{S_n}{n} \in (\alpha^{*} - \epsilon, \alpha^{*} + \epsilon)) \PP(\frac{S_n}{n} \in (\alpha^{*} - \epsilon, \alpha^{*} + \epsilon))]^{\frac{1}{n}} \\
&\geq w^{\alpha^{*} + \epsilon + \delta} \PP(\frac{S_n}{n} \in (\alpha^{*} - \epsilon, \alpha^{*} + \epsilon))^{\frac{1}{n}}  \PP(L_n - S_n \leq \delta n | \frac{S_n}{n} \in (\alpha^{*} - \epsilon, \alpha^{*} + \epsilon))^{\frac{1}{n}}. 
\end{align*}
Taking $n \rightarrow +\infty$ on both sides yields
\begin{align*}
\liminf_{n \rightarrow +\infty} (\EE w^{L_n})^{\frac{1}{n}} \geq w^{\alpha^{*} + \epsilon + \delta} \cdot e^{-I(\alpha^{*} + \epsilon)}. 
\end{align*}
Since $\epsilon$ and $\delta$ are arbitrary, we have
\begin{align*}
\liminf_{n \rightarrow +\infty} (\EE w^{L_{n}})^{\frac{1}{n}} \geq w^{\alpha^{*}} e^{-I(\alpha^{*})} = \frac{2d-1}{2d} w + \frac{1}{2d} \cdot \frac{1}{w}. 
\end{align*}
On the other hand, $\EE w^{L_n} \leq \EE w^{S_n}$ for all $w < 1$, so
\begin{align*}
\limsup_{n \rightarrow +\infty} (\EE w^{L_{n}})^{\frac{1}{n}} \leq \frac{2d-1}{2d} w + \frac{1}{2d} \cdot \frac{1}{w}. 
\end{align*}
Thus, we conclude that
\begin{align*}
\lim_{n \rightarrow +\infty} (\EE w^{L_{n}})^{\frac{1}{n}} = \frac{2d-1}{2d} w + \frac{1}{2d} \cdot \frac{1}{w}
\end{align*}
for $w \in (\frac{\sqrt{2d-1}}{d}, 1)$. 

\bigskip

\item $w \in (0, \frac{1}{\sqrt{2d-1}})$, and $\alpha^{*} < 0$. 

In this case, we have
\begin{align*}
(\EE w^{L_n})^{\frac{1}{n}} &\geq [\EE(w^{L_n} 1_{\{L_n \leq \delta n\}} | S_n \leq - \epsilon n) \PP(S_n \leq - \epsilon n)]^{\frac{1}{n}} \\
&\geq w^{\delta} \PP(L_n \leq \delta n | S_n \leq - \epsilon n)^{\frac{1}{n}} \PP(S_n \leq - \epsilon n)^{\frac{1}{n}}. 
\end{align*}
Again, sending $n \rightarrow +\infty$ and take $\epsilon, \delta$ arbitrarily small yields
\begin{align*}
\liminf_{n \rightarrow +\infty}(\EE w^{L_n})^{\frac{1}{n}} \geq e^{- I(0)} = \frac{\sqrt{2d-1}}{d}. 
\end{align*}
On the other hand, 
\begin{align*}
\lim_{w \downarrow \frac{1}{\sqrt{2d-1}}} \lim_{n \rightarrow +\infty} (\EE w^{L_n})^{\frac{1}{n}} = \frac{\sqrt{2d-1}}{d}, 
\end{align*}
by monotonocity, we have
\begin{align*}
\limsup_{n \rightarrow +\infty} (\EE w^{L_{n}})^{\frac{1}{n}} \leq \frac{\sqrt{2d-1}}{d}, 
\end{align*}
and thus
\begin{align*}
\limsup_{n \rightarrow +\infty} (\EE w^{L_{n}})^{\frac{1}{n}} = \frac{\sqrt{2d-1}}{d}
\end{align*}
for all $w \in (0, \frac{1}{\sqrt{2d-1}})$. 

\bigskip

\item $w = \frac{1}{\sqrt{2d-1}}$, $\alpha^{*} = 0$. 
Again, by monotonocity in $w$, we have
\begin{align*}
\lim_{n \rightarrow +\infty} (\EE \sqrt{2d-1}^{-{L_n}})^{\frac{1}{n}} = \frac{\sqrt{2d-1}}{d}. 
\end{align*}

\end{enumerate}
\end{proof}

\bigskip

The following corollary is an immediate consequence of Propositions \ref{the limit for larger w} and \ref{the limit for smaller w}.

\bigskip

\begin{cor}
The laws for the random variables $\{\frac{L_n}{n}\}_{n \geq 1}$ satisfy the large deviations principle with rate function
\begin{align*}
I_{L}(x) = \sup_{\theta} [\theta x - \log h_{L}(\theta)], 
\end{align*}
where
\begin{align*}
h_{L}(\theta) = \left \{
\begin{array}{rl}
& \frac{2d-1}{2d}e^{\theta} + \frac{1}{2d}e^{- \theta}, \qquad   \theta \geq - \frac{1}{2} \log(2d-1) \\
&\frac{\sqrt{2d-1}}{d}, \qquad \qquad \qquad                     \theta < \log \frac{\sqrt{2d-1}-1}{2(d-1)}
\end{array} \right., 
\end{align*}
\end{cor}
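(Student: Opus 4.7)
The plan is to apply the Gärtner--Ellis theorem (\cite{Dembo}, Theorem 2.3.6) directly. First I would set $w = e^{\theta}$ and combine Propositions \ref{the limit for larger w} and \ref{the limit for smaller w} to identify the scaled cumulant generating function
\begin{align*}
\Lambda(\theta) := \lim_{n \to \infty} \frac{1}{n} \log \EE e^{\theta L_n} = \log h_L(\theta)
\end{align*}
for every $\theta \in \RR$. The three ranges $w \geq 1$, $w \in (\tfrac{1}{\sqrt{2d-1}},1)$, $w \in (0, \tfrac{1}{\sqrt{2d-1}}]$ correspond to $\theta \geq 0$, $-\tfrac{1}{2}\log(2d-1) < \theta < 0$, and $\theta \leq -\tfrac{1}{2}\log(2d-1)$ respectively, giving the two-branch formula for $h_L$.

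Next I would verify the hypotheses of Gärtner--Ellis. Finiteness of $\Lambda$ on all of $\RR$ is immediate since $L_n \in \{0,1,\ldots,n\}$. Continuity (hence lower semicontinuity) of $\Lambda = \log h_L$ on each branch is clear by inspection, and the two branches agree at the breakpoint $\theta_0 = -\tfrac{1}{2}\log(2d-1)$: plugging $e^{\theta_0} = (2d-1)^{-1/2}$ into the analytic branch yields $\tfrac{\sqrt{2d-1}}{2d} + \tfrac{\sqrt{2d-1}}{2d} = \tfrac{\sqrt{2d-1}}{d}$, matching the constant branch. The only nontrivial point is essential smoothness: since the effective domain is all of $\RR$, this reduces to differentiability of $\Lambda$ at $\theta_0$. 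A short calculation shows
\begin{align*}
h_L'(\theta_0^{+}) = \tfrac{2d-1}{2d}e^{\theta_0} - \tfrac{1}{2d}e^{-\theta_0} = \tfrac{\sqrt{2d-1}}{2d} - \tfrac{\sqrt{2d-1}}{2d} = 0 = h_L'(\theta_0^{-}),
\end{align*}
so $\Lambda$ is $C^1$ at the breakpoint and hence on all of $\RR$. Convexity of $\Lambda$ (inherited as a limit of scaled log-MGFs of nonnegative random variables) is automatic.

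With these in place Gärtner--Ellis yields the full LDP for $\{L_n/n\}_{n \geq 1}$ with rate function $\Lambda^{*}(x) = \sup_\theta[\theta x - \log h_L(\theta)] = I_L(x)$, as stated. Since the two propositions already did the analytic work of computing $\Lambda$ on each regime, there is no substantive obstacle: the only real step is matching values and one-sided derivatives at $\theta_0$ so that essential smoothness holds, which is precisely what makes the ``phase transition'' at $w = \tfrac{1}{\sqrt{2d-1}}$ consistent with a single well-defined rate function.
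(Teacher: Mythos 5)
Your proposal is correct and follows exactly the route the paper intends: the corollary is stated there as an immediate consequence of Propositions \ref{the limit for larger w} and \ref{the limit for smaller w} via G\"artner--Ellis, and you simply make explicit the verification (value and one-sided derivatives matching at $\theta_0 = -\tfrac{1}{2}\log(2d-1)$, so that $\Lambda$ is differentiable and essential smoothness holds with effective domain $\RR$) that the paper leaves implicit. As a minor aside, your identification of the breakpoint as $-\tfrac{1}{2}\log(2d-1)$ is the right one; the condition $\theta < \log\tfrac{\sqrt{2d-1}-1}{2(d-1)}$ appearing in the second branch of the corollary's statement is a typo carried over from the $T_n$ theorem.
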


\bigskip

Now we are in a position to prove Theorem \ref{large deviations for turns}.

\bigskip

\begin{flushleft}
\textbf{Proof of Theorem \ref{large deviations for turns}. }
\end{flushleft}

\begin{proof}
Recall that
\begin{align*}
\EE e^{\theta T_{n}} = \PP(L_{n} = 0) + \PP(L_{n} \geq 1) \EE w(\theta)^{L_{n} - 1}, 
\end{align*}
where we have
\begin{align*}
\PP(L_{n} = 0) \leq C (\frac{\sqrt{2d-1}}{d})^{n}
\end{align*}
for all $n$. On the other hand, Propositions \ref{the limit for larger w} and \ref{the limit for smaller w} imply that
\begin{align*}
\lim_{n \rightarrow +\infty} (\EE w^{L_{n}})^{\frac{1}{n}} \geq \frac{\sqrt{2d-1}}{d}
\end{align*}
for all $w > 0$. Thus, we see that
\begin{align*}
\lim_{n \rightarrow +\infty} (\EE e^{\theta T_{n}})^{\frac{1}{n}} = \lim_{n \rightarrow +\infty} (\EE w(\theta)^{L_n})^{\frac{1}{n}} = h(\theta), 
\end{align*}
where $h$ is defined in the theorem. It is also straigtforward to check that $h$ is essentially smooth. Thus, Gartner-Ellis theorem implies that the laws for $\{\frac{T_n}{n}\}$ satisfies the large deviations principle with rate function
\begin{align*}
I(x) = \sup_{\theta} [\theta x - \log h(\theta)]. 
\end{align*}
\end{proof}

\bigskip

We end this section with two remarks. 

\bigskip

\begin{rmk}
As an alternative to Gartner-Ellis theorem, one can compute the rate function for $\frac{T_n}{n}$ directly as follows: 
\begin{align*}
\PP(\frac{T_n}{n} \in (\alpha - \epsilon, \alpha + \epsilon)) = \sum_{\beta} \PP(\frac{T_n}{n} \in (\alpha - \epsilon, \alpha + \epsilon) | \frac{L_n}{n} \in (\beta - \delta, \beta + \delta)) \cdot \PP(\frac{L_n}{n} \in (\beta - \delta, \beta + \delta))
\end{align*}
where the sum is taken over appropriate $\beta$'s $\in (0,1)$. In each product, the first probability is known as $T_n | L_n$ has a binomial distribution, while the second probability can be aysmptotically computed from the LDP for $L_n$. Finally, sending $\delta \rightarrow 0$, one can get the rate function for $\{\frac{T_n}{n}\}$. 
\end{rmk}

\bigskip

\begin{rmk}
We give a heuristic explaination why the rate function of $\{\frac{L_n}{n}\}$ takes this form. Let $w = e^{\eta}$, and consider the Laplace transform
\begin{align*}
(\EE w^{L_n})^{\frac{1}{n}} = (\EE e^{\eta L_n})^{\frac{1}{n}}. 
\end{align*}
For $w > w^{*} = \frac{1}{\sqrt{2d-1}}$, the limit exists and is equal to $\frac{2d-1}{2d} e^{\eta} + \frac{1}{2d} e^{- \eta}$, 
where $w = e^{\eta}$. Call the limit of the right hand side $\Lambda(\eta)$, if it exists. One sees that $\eta^{*} = \log w^{*}$ is the minimizer of $\Lambda$, and thus
\begin{align*}
\Lambda(\eta) \geq \frac{\sqrt{2d-1}}{d}
\end{align*}
for all $\eta < \eta^{*}$. On the other hand, if $\Lambda(\eta)$ exists, it must be convex. Because $L_n \geq 0$, it also must not (strictly) increase when $\eta$ becomes smaller. So the convexity in $\eta$ forces the graph to the left of $\eta^{*} = \log w^{*}$ to be flat, and is thus equal to $\frac{\sqrt{2d-1}}{d}$. 
\end{rmk}

\bigskip

\section{Central limit theorem and the invariance principle}

In this section, we derive asymptotic distribution of $T_n$ for large $n$, including central limit theorem and the invariance principle. Under the natural coupling $L_n = S_n + D_n$, $D_n \leq 2 R_n$, where $R_n$ is (almost) surely bounded by a geometric random variable $R$. As we will be frequently using this property, we state it as a lemma below.  

\bigskip

\begin{lem}
Let $L_n = S_n + D_n$, and $R_n$ be defined as above. Then, $D_n \leq w R_n$, and $R = \sup_{n}R_n$ has a geometric distribution with $\PP(R_n = k) = \frac{2d-2}{2d-1} \cdot (\frac{1}{2d-1})^{\frac{1}{n}}$. 
\end{lem}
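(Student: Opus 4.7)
The statement has two parts: the pathwise bound $D_n \leq 2 R_n$, and the identification of the law of $R = \sup_n R_n$ as the geometric distribution with success parameter $\frac{2d-2}{2d-1}$. My plan is to treat them separately, first by a direct coupling argument and then by appealing to the strong Markov property together with the classical gambler's ruin formula for the asymmetric walk.

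For the pathwise bound, I would track the increments of $D_n = L_n - S_n$ under the coupling. Whenever $L_i > 0$ one has $L_{i+1} = L_i + X_{i+1}$, so $D$ does not change. The only increments of $D$ occur at times $i$ with $L_i = 0$, where $L_{i+1} = 1$ while $S_{i+1} - S_i = X_{i+1} \in \{\pm 1\}$; thus
\begin{align*}
D_{i+1} - D_i = 1 - X_{i+1} \in \{0, 2\}.
\end{align*}
Summing over the times before $n$ at which $L$ is at the origin (which by definition is precisely $R_{n-1} \leq R_n$) gives $D_n \leq 2 R_n$. (This matches the earlier informal use of $D_n \leq 2 R_{n-1}$ in Section 2.)

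For the law of $R$, I would argue via a renewal structure. Conditioning on the first visit to $0$, the subsequent excursion of $L$ away from $0$ has, by the resetting rule $L_{i+1}=1$ when $L_i=0$, the law of the walk started at $1$ and run until its first (possible) return to $0$. By the strong Markov property, the number of returns to $0$ is therefore $1 + \mathrm{Geom}$-many, where each return happens with probability $p_{\mathrm{ret}} := \PP_1(\text{walk hits }0)$. For the unrestricted walk $S$ on $\ZZ$ with up-probability $\tfrac{2d-1}{2d}$ and down-probability $\tfrac{1}{2d}$, the classical gambler's ruin (or one-step analysis using the ratio $q/p$) gives $p_{\mathrm{ret}} = \tfrac{1}{2d-1}$. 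Hence the number of returns is geometric with success probability $1 - \tfrac{1}{2d-1} = \tfrac{2d-2}{2d-1}$, yielding $\PP(R=k) = \tfrac{2d-2}{2d-1}\bigl(\tfrac{1}{2d-1}\bigr)^{k-1}$ for $k\geq 1$.

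The only subtle point, and the one I would spend the most care on, is justifying the independence in the renewal decomposition: the excursions of $L$ between consecutive visits to $0$ are i.i.d.\ copies of the first excursion, each having an atom at ``never returns'' of mass $1 - p_{\mathrm{ret}}$, and the counting variable $R$ is the number of excursions that do return, plus one for the initial return at step $0$. Once this is set up, the monotone convergence $R_n \uparrow R$ is immediate since returns to $0$ only accumulate over time, so $R = \sup_n R_n$ and the geometric law is the law of the limit. Combining with the pathwise bound then gives $D_n \leq 2 R_n \leq 2R$ almost surely, which is the uniform domination used throughout Sections 2--3.
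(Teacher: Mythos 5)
Your proof is correct and fills in what the paper leaves implicit: the lemma is stated without an accompanying proof, but the two ingredients you supply --- the increment analysis showing $D$ jumps by $0$ or $2$ precisely at visits of $L$ to the origin, and the strong-Markov/gambler's-ruin argument identifying the geometric law of $R$ --- are exactly the reasoning sketched informally in Section 2 (with the citation to Feller). You also correctly read past the typos in the lemma statement, which should read $D_n \leq 2R_n$ and $\PP(R = k) = \frac{2d-2}{2d-1}\left(\frac{1}{2d-1}\right)^{k-1}$.
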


\bigskip

\begin{thm}
Under the above assumptions, $\frac{1}{\sqrt{n}}(L_{n}-\frac{d-1}{d}n)$ converges in distribution to $N(0, \frac{2d-1}{d^2})$, and $\frac{1}{\sqrt{n}}(T_{n}-\frac{2(d-1)^2}{d(2d-1)}n)$ converges in distribution to $N(0, \frac{2(d-1)^{2}(5d-2)}{d^{2}(2d-1)^{2}})$. 
\end{thm}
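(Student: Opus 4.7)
The plan is to prove the CLT for $L_n$ first by an elementary Slutsky argument, then reduce the CLT for $T_n$ to two asymptotically independent Gaussian limits using the conditional-binomial structure of $T_n \mid L_n$.

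For $L_n$, I use the coupling $L_n = S_n + D_n$ from Section 2. Here $S_n = \sum_{i=1}^n X_i$ is a sum of i.i.d.\ random variables with $\EE X_1 = (d-1)/d$ and $\var X_1 = (2d-1)/d^2$, so the classical CLT gives $\tfrac{1}{\sqrt{n}}(S_n - \tfrac{d-1}{d} n) \Rightarrow N\!\bigl(0,\tfrac{2d-1}{d^2}\bigr)$. By the preceding lemma, $D_n \leq 2R$ with $R$ geometric and hence almost surely finite, so $D_n/\sqrt{n} \to 0$ in probability, and Slutsky's theorem closes out the CLT for $L_n$.

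For $T_n$, the structural fact I will exploit is that conditional on $L_n = \ell \geq 1$ the reduced word is uniform on reduced words of length $\ell$; a direct computation shows the turn indicators $I_j = 1_{a_j \neq a_{j+1}}$ are i.i.d.\ Bernoulli$(p)$ with $p = (2d-2)/(2d-1)$. I will enlarge the probability space with an independent i.i.d.\ Bernoulli$(p)$ sequence $(B_j)_{j \geq 1}$ and couple $T_n \stackrel{d}{=} \sum_{j=1}^{L_n-1} B_j$. Setting $\mu_L = (d-1)/d$, $\sigma_L^2 = (2d-1)/d^2$, and $\mu_T = p\mu_L$, the decomposition
\begin{align*}
T_n - \mu_T n \;=\; M_{L_n - 1} \;+\; p(L_n - \mu_L n) \;-\; p, \qquad M_k := \sum_{j=1}^{k}(B_j - p),
\end{align*}
splits the fluctuations of $T_n$ into a coin-piece and a walk-piece. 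For the coin piece, the bound $L_n - \mu_L n = O_P(\sqrt{n})$ combined with Doob's maximal inequality gives $M_{L_n - 1} = M_{\lfloor \mu_L n \rfloor} + o_P(\sqrt{n})$, so the classical CLT for $M_k$ yields $M_{L_n-1}/\sqrt{n} \Rightarrow N(0, \mu_L p(1-p))$ (equivalently, apply Anscombe's theorem). The walk piece yields $p(L_n - \mu_L n)/\sqrt{n} \Rightarrow N(0, p^2 \sigma_L^2)$ by the first step. Since $(B_j)$ is independent of the walk, conditioning on $L_n$ and using the conditional CLT for $M_{L_n - 1}/\sqrt{n}$ shows that the two limits are jointly Gaussian and independent, hence
\begin{align*}
\frac{T_n - \mu_T n}{\sqrt{n}} \;\Longrightarrow\; N\!\bigl(0,\; p^2\sigma_L^2 + \mu_L p(1-p)\bigr),
\end{align*}
and a short algebraic check confirms $p^2\sigma_L^2 + \mu_L p(1-p) = \tfrac{2(d-1)^2(5d-2)}{d^2(2d-1)^2}$, as required.

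The main obstacle is establishing the asymptotic independence of the walk piece and the coin piece; once the coupling with the external Bernoulli sequence is justified by the uniform-on-reduced-words property, this independence follows from the conditional CLT and the joint convergence is then assembled via Slutsky. The rest of the argument is a routine combination of the i.i.d.\ CLT, an Anscombe-type random-index CLT, and Slutsky's theorem.
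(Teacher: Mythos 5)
Your proof is correct, and it takes a genuinely different route from the paper's for the $T_n$ part (the $L_n$ part is identical: the coupling $L_n = S_n + D_n$ with $D_n/\sqrt{n}\to 0$ and Slutsky). The paper proves the CLT for $T_n$ by computing the moment generating function of $\tfrac{1}{\sqrt{n}}(T_n-\mu n)$ directly: it conditions on $L_n$, writes $\EE(e^{\theta T_n/\sqrt{n}}\mid L_n)=(1-p+pe^{\theta/\sqrt{n}})^{L_n-1}$, Taylor-expands the logarithm, and identifies the limit $\exp\bigl(\tfrac{1}{2}[p^2\sigma_L^2+\mu_L p(1-p)]\theta^2\bigr)$ using the LLN and CLT for $L_n$ together with dominated convergence. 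You instead realize $T_n$ in distribution as $\sum_{j=1}^{L_n-1}B_j$ with an external i.i.d.\ Bernoulli$(p)$ sequence independent of the walk, split the fluctuation into the coin piece $M_{L_n-1}$ (handled by Anscombe's random-index CLT) and the walk piece $p(L_n-\mu_L n)$ (handled by the first part), and obtain asymptotic independence by conditioning on $L_n$. Both arguments ultimately rest on the same conditional-binomial structure and the same law-of-total-variance identity $\sigma^2=p^2\sigma_L^2+\mu_L p(1-p)$, and your algebra checks out. What your version buys is a transparent accounting of the two variance contributions and an argument that sidesteps the exchange-of-limits step in the MGF computation (the paper's appeal to dominated convergence there is the least explicit point of its proof, since the integrands $e^{\theta T_n/\sqrt{n}}$ are not obviously uniformly dominated); what it costs is the extra care needed to justify the coupling (which is legitimate for each fixed $n$, since the conditional law of $T_n$ given $L_n=\ell$ is exactly Binomial$(\ell-1,p)$) and the factorization of the joint characteristic function, which you correctly flag as the main obstacle and resolve by the conditional CLT.
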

\begin{proof}
For any $x \in \RR$, 
\begin{align*}
F_{n}(x)&=\PP(\frac{1}{\sqrt{n}}(L_{n}-\frac{2d-1}{d^2}n) \leq x)\\
&=\PP(\frac{1}{\sqrt{n}}(S_{n}-\frac{2d-1}{d^2}n)+\frac{D_n}{\sqrt{n}} \leq x) \\
& \rightarrow F(x),
\end{align*}
where $F$ is the distribution function for $N(0, \frac{2d-1}{d^2})$. The convergence in the last line follows from the fact that $\frac{D_n}{\sqrt{n}} \rightarrow 0$ almost surely, as $R = \sup_{n}R_n$ is almost surely finite. \\

Now we compute the moment generating function for $\frac{1}{\sqrt{n}}(T_{n}-\frac{2(d-1)^2}{d(2d-1)}n)$. For simplicity, let $\mu = \frac{2(d-1)^2}{d(2d-1)}$ and $p=\frac{2d-2}{2d-1}$. 
\begin{align*}
\EE e^{\frac{\theta}{\sqrt{n}}(T_{n}-\mu n)}&= e^{-\theta \mu \sqrt{n}}\EE e^{\frac{\theta}{\sqrt{n}}T_{n}}\\
&= e^{-\theta \mu \sqrt{n}} \EE \EE (e^{\frac{\theta}{\sqrt{n}}T_{n}}|L_n)\\
&= e^{-\theta \mu \sqrt{n}} \EE (1-p+pe^{\frac{\theta}{\sqrt{n}}})^{L_{n}-1}\\
&= e^{-\theta \mu \sqrt{n}} \EE e^{(L_{n}-1) \log(1+\frac{p\theta}{\sqrt{n}}+\frac{p{\theta}^2}{2n}+o(\frac{1}{n}))}\\
&= e^{-\theta \mu \sqrt{n}} \EE e^{(L_{n}-1)[\frac{p\theta}{\sqrt{n}}+\frac{1}{2n}(p{\theta}^2-p^{2}{\theta}^{2})+o(\frac{1}{n})]}\\
&= \EE e^{\frac{p\theta}{\sqrt{n}}(L_{n}-\frac{\mu}{p}n)+\frac{1}{2}{\theta}^{2}p(1-p)\frac{L_n}{n}+o(1)}\\
& \rightarrow e^{\frac{(d-1)^{2}(5d-2)}{d^{2}(2d-1)^{2}}{\theta}^{2}}
\end{align*}
The convergence in the last line follows from the law of large numbers and central limit theorem for $L_n$, and dominated convergence. This implies that $\frac{1}{\sqrt{n}}(T_{n}-\frac{2(d-1)^2}{d(2d-1)}n)$ converges in distribution to $N(0, \frac{2(d-1)^{2}(5d-2)}{d^{2}(2d-1)^{2}})$. \\
\end{proof}

\bigskip

Since $L_n$ behaves very much like $S_n$, one expects that under proper scaling, it converges to the standard Brownian motion. 

\bigskip

\begin{prop}
Let $(\Omega,\PP, \cF)$ be a probability space affording the discrete time process $\{S_i, i \in \NN\}$. For each $n \in \NN$, define a $C^0([0,1])$-valued random variable $\{W^{(n)}_t: t\in [0,1]\}$ by 
\begin{align*}
W^{(n)}_t = [L_{tn}- \frac{d-1}{d} tn]/\sqrt{n \frac{2d-1}{d^2}}
\end{align*}
for $t \in \frac{1}{n} [n]$, and linearly interpolated for other values of $t$. Then the sequence converges in law to the standard one dimensional Brownian motion on $[0,1]$ as $n \to \infty$.
\end{prop}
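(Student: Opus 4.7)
The plan is to couple $L_n$ with the i.i.d.\ partial sum $S_n$ and reduce the invariance principle to Donsker's classical theorem, exploiting the fact that $D_n = L_n - S_n$ is uniformly bounded by $2R$, where $R$ is the almost surely finite geometric random variable of the preceding lemma. The same coupling drove the central limit theorem proved just above; here one only needs to upgrade the convergence from marginals to the uniform norm on $C^0([0,1])$.

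First I would introduce the companion process
\[
\tilde W^{(n)}_t = \frac{S_{tn} - \frac{d-1}{d} tn}{\sqrt{n(2d-1)/d^2}}
\]
for $t \in \frac{1}{n}[n]$, extended by linear interpolation elsewhere. Since the $X_i$ are i.i.d.\ with mean $\frac{d-1}{d}$ and variance $\frac{2d-1}{d^2}$, Donsker's theorem gives $\tilde W^{(n)} \Rightarrow W$ in $C^0([0,1])$, where $W$ is a standard Brownian motion on $[0,1]$. Next I would control $W^{(n)} - \tilde W^{(n)}$ in the sup norm. On each sub-interval $[(k-1)/n, k/n]$, both processes are affine in $t$, so the sup over such a sub-interval is attained at an endpoint, where $W^{(n)}_{k/n} - \tilde W^{(n)}_{k/n} = D_k/\sqrt{n(2d-1)/d^2}$. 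A direct inspection of the recursive definition of $L$ shows that $D_n$ is non-decreasing: it is unchanged at every step except when $L_{n-1} = 0$ and $X_n = -1$, in which case it jumps by exactly $2$. Hence $\max_{0 \leq k \leq n} D_k = D_n \leq 2 R_n \leq 2 R$, and
\[
\sup_{t \in [0,1]} \bigl| W^{(n)}_t - \tilde W^{(n)}_t \bigr| \leq \frac{2 R}{\sqrt{n(2d-1)/d^2}} \longrightarrow 0 \quad \text{a.s.}
\]

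Finally, I would conclude via the converging-together lemma (Slutsky's theorem for random elements of a metric space, cf.\ Billingsley, Theorem 3.1): almost-sure convergence of $W^{(n)} - \tilde W^{(n)}$ to $0$ in $C^0([0,1])$, combined with $\tilde W^{(n)} \Rightarrow W$, yields $W^{(n)} \Rightarrow W$. No genuine obstacle appears; the work is bookkeeping: checking the monotonicity of $D_n$, noting that affine interpolation contributes no additional sup-norm error, and applying Donsker with the right normalization. The heart of the matter is the uniform bound $D_n \leq 2R$, which at the $1/\sqrt{n}$ scale quietly forces the coupling error into the negligible regime.
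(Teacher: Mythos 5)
Your proposal is correct and fills in exactly the argument the paper compresses into its one-line remark: couple $L$ with $S$, note that $D_n = L_n - S_n$ is nondecreasing and bounded by the a.s.\ finite geometric $2R$ uniformly in the time index, apply Donsker to the $S$-process, and conclude by the converging-together lemma. This is essentially the same approach as the paper, just spelled out.
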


\begin{proof}
This is plain in light of Lemma 3.4, as $L_{[tn]}-S_{[tn]}$ is bounded by a geometric random variable, uniformly in $t$. 
\end{proof}

\bigskip

More difficult is the invariance principle for $T_n$, the number of turns at time $n$. We prove it in the next theorem. Let $\sigma ^2 = \frac{2(d-1)^{2}(5d-2)}{d^{2}(2d-1)^{2}}$, then, 

\bigskip

\begin{thm}
For each $n$,define a $C^0([0,1])$-valued random variable $\{W^{(n)}_t: t\in [0,1]\}$ by 
\begin{align} \label{W for T}
W^{(n)}_t = \frac{1}{\sigma \sqrt{n}}[T_{tn}- \frac{2(d-1)^2}{d(2d-1)} tn]
\end{align}
for $t \in \frac{1}{n} [n]$, and linearly interpolated for other values of $t$. Then the sequence converges in law to the standard one dimensional Brownian motion on $[0,1]$ as $n \to \infty$.
\end{thm}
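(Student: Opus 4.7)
The plan is to split
\begin{align*}
T_{tn} - \mu tn = p\Bigl(L_{tn} - \tfrac{d-1}{d}tn\Bigr) + \bigl(T_{tn} - pL_{tn}\bigr),
\end{align*}
where $\mu = 2(d-1)^2/(d(2d-1)) = p(d-1)/d$ with $p = (2d-2)/(2d-1)$, and to show that after dividing by $\sigma\sqrt n$ the two summands converge jointly in $C^0([0,1])$ to independent scaled Brownian motions $\lambda B^L_t$ and $\rho B^T_t$ satisfying $\lambda^2 + \rho^2 = 1$. The first summand is immediate from the invariance principle for $L$ in the preceding proposition: $p(L_{tn} - (d-1)tn/d)/(\sigma\sqrt n)$ converges in $C^0([0,1])$ to $\lambda B^L$ with $\lambda^2 = p^2(2d-1)/(d^2\sigma^2) = 2(2d-1)/(5d-2)$.

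For the second summand the key is the following conditional joint structure. Given the length trajectory $(L_k)_{k\le n}$ and two times $s<t$, write $m(s,t) = \min_{sn\le k\le tn}L_k$. Then $W_{sn}$ and $W_{tn}$ share a common prefix of length $m(s,t)$, while their remaining suffixes (of lengths $L_{sn}-m(s,t)$ and $L_{tn}-m(s,t)$) are generated by disjoint families of uniformly random letter additions. Since adjacent pairs in a uniformly random reduced word are independently turns with probability $p$, conditionally on $(L_k)$
\begin{align*}
T_{tn} - T_{sn} \eqd \mathrm{Bin}\bigl(L_{tn}-m(s,t),\,p\bigr) - \mathrm{Bin}\bigl(L_{sn}-m(s,t),\,p\bigr),
\end{align*}
with the two binomials independent. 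Because $L$ has positive drift $(d-1)/d$, the earlier-time overhang $L_{sn}-m(s,t)$ is $O_\PP(\sqrt{n(t-s)})$, so after scaling by $\sqrt n$ the second binomial contributes only lower-order noise. A conditional CLT then yields that $(T_{tn}-T_{sn}-p(L_{tn}-L_{sn}))/(\sigma\sqrt n)$ converges to a Gaussian of variance $(t-s)(d-1)p(1-p)/(d\sigma^2) = (t-s)\rho^2$, asymptotically independent of the $L$-driven fluctuation. Over a finite partition $t_0 < t_1 < \cdots < t_k$, the randomness shared between two consecutive increments (the turn indicators of $W_{t_j n}$ at positions beyond $\max(m(t_{j-1},t_j), m(t_j, t_{j+1}))$) is at most $O_\PP(\sqrt n)$ Bernoullis, contributing $O_\PP(n^{-1/4})$ after scaling, so the $k$ increments are asymptotically jointly Gaussian and independent. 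A direct computation gives $\lambda^2 + \rho^2 = 1$, hence the finite-dimensional distributions of $W^{(n)}$ converge to those of standard Brownian motion.

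Tightness in $C^0([0,1])$ then follows from Billingsley's criterion together with the fourth-moment bound $\EE|W^{(n)}_t - W^{(n)}_s|^4 \le C(t-s)^2$, which is obtained by conditioning on $(L_k)$ and combining sub-Gaussian tail estimates for binomial increments of $T\mid L$ with the analogous estimates for increments of $L$. The main obstacle I expect is the rigorous justification of the conditional identity for $T_{tn}-T_{sn}$: cancellations continually reset turn indicators, so one must verify that, conditional on the length trajectory, the suffixes past $m(s,t)$ are truly generated by fresh independent uniform letter choices and carefully account for the small shared randomness between consecutive intervals. An alternative that sidesteps this combinatorial bookkeeping is to mimic the moment-generating-function computation used in the proof of the marginal CLT above, computing $\EE\exp(\sum_j(\theta_j/\sqrt n)(T_{t_{j+1}n}-T_{t_j n}))$ by conditioning on $(L_{t_j n})_j$, applying the Binomial MGF coordinatewise, and invoking the joint invariance principle for $L$ to pass to the limit.
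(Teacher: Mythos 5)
Your proposal is correct in outline but takes a genuinely different route from the paper. The paper handles the finite-dimensional distributions by coupling the increment $T_{\lfl tn \rfl}-T_{\lfl sn \rfl}$ with $U_{[\lfl sn\rfl,\lfl tn\rfl]}$, the turn count of the sub-walk on $[sn,tn]$ run as a fresh walk from the identity: the two differ by at most a multiple of $|\min_k S_k|$ over the block, which is dominated by a geometric random variable, so each scaled increment inherits the marginal CLT verbatim and distinct increments are (up to the geometric error) functions of disjoint blocks of letters, hence exactly independent at leading order; tightness is then obtained from an Ottaviani-type maximal inequality. You instead condition on the length trajectory and split the increment into $p\,(L_{tn}-L_{sn})$ plus a conditionally binomial fluctuation, deriving the limit from the invariance principle for $L$ together with a conditional CLT, and checking $\lambda^2+\rho^2=1$ (your variance bookkeeping is correct, and your conditional description of the reduced word --- fresh uniform letters at each up-step, turns i.i.d.\ Bernoulli$(p)$ along the word --- is accurate). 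What each buys: the paper's coupling sidesteps entirely the combinatorial bookkeeping you flag as your main obstacle, and reuses the marginal CLT wholesale; your decomposition is more informative, exhibiting the two independent sources of fluctuation (the length process and the turn placement given the length), and your fourth-moment tightness bound is more routine than the maximal-inequality argument and does go through, since $T_m\mid L_m$ is binomial, $L_m=S_m+D_m$ with $S_m$ an i.i.d.\ sum and $D_m$ dominated by a geometric, giving $\EE(T_m-\mu m)^4=O(m^2+1)$.

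Two small refinements to your argument. First, the overhang $L_{sn}-m(s,t)$ is not merely $O_\PP(\sqrt{n(t-s)})$ but is stochastically dominated by a geometric random variable uniformly in $n,s,t$ (it is the future-minimum deficit of a positive-drift walk); this is exactly the observation the paper exploits, and using it makes both your ``second binomial'' and the shared randomness between consecutive increments $O_\PP(1)$ rather than $O_\PP(\sqrt n)$, simplifying the joint-Gaussianity step. Second, the conditional identity you worry about does hold essentially as stated: conditioning on the length trajectory fixes which steps are cancellations, and the surviving up-steps carry independent letters uniform over the $2d-1$ admissible choices, so the suffixes of $W_{sn}$ and $W_{tn}$ past the running minimum are built from disjoint such families (the only shared letter is the one at position $m(s,t)$, conditioning on which the two junction turn indicators remain independent Bernoulli$(p)$). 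So the gap you identify is real but fillable; neither route is blocked.
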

\begin{proof}

Recall for a sequence of processes to converge to Brownian motion, it suffices to check that their finite dimensional joint distributions converge to that of a Brownian motion and Prohorov tightness criterion (see Theorem 16.5 of \cite{Kallenberg}).  We prove them in the two lemmas below. 

\bigskip

\begin{lem}
Let $W^{(n)}_{t}$ be defined as above. Then, 
\begin{align*}
(W^{(n)}_{t_1}, W^{(n)}_{t_2}, \cdots, W^{(n)}_{t_k}) \rightarrow (B_{t_1}, B_{t_2}, \cdots, B_{t_k}), 
\end{align*}
where $B_t$ is a standard one-dimensional Brownian motion starting at $0$. 
\end{lem}
\begin{proof}
It suffices to show that $\cL(W^{(n)}_t - W^{(n)}_s| W^{(n)}_r: r \le s)$ is asymptotically $N(0,t-s)$ and independent of $\{W^{(n)}_r: r \le s\}$. This is a generalization of the central limit theorem for $T_n$, which corresponds to the case $s = 0$, and also suggests that for general $s$, we can compare with the case $s=0$. We couple $W^{(n)}_{t}-W^{(n)}_{s}$ with two extremely cases, as considered below. 

Let $T_{(\cdot)}$ be the standard process of number of turns, and define
\begin{align*}
U_{(m,n)}:= T|_{[m,n]}, 
\end{align*}
that is, the number of turns in the segment of the walk during the time interval $[m,n]$. Then, one immediately sees that
\begin{align*}
T_{\lfl tn \rfl} - T_{\lfl sn \rfl} \leq U_{[\lfl sn \rfl, \lfl tn \rfl]}, 
\end{align*}
as the former may cancel turns created before time $sn$. 

On the other hand, we have the reversed inequality that
\begin{align*}
T_{\lfl tn \rfl} - T_{\lfl sn \rfl} \geq U_{[\lfl sn \rfl, \lfl tn \rfl]} - 2 |\min_{1 \leq k \leq \lfl (t-s)n \rfl} S_{k}|, 
\end{align*}
and it suffices to estimate the behavior of $|\min_{1 \leq k \leq n}S_{k}|$. Since
\begin{align*}
\PP(|\min_{1 \leq k \leq n}| = M ) \leq \sum_{k=0}^{\lfl (t-s)n \rfl} \PP(S_{k} = -M) \leq C (\frac{\sqrt{2d-1}}{d})^{M}, 
\end{align*}
we see that it is bounded by a geometric random variable, and thus
\begin{align*}
\frac{1}{\sigma n} |\min_{1 \leq k \leq (t-s)n} S_{k}| \rightarrow 0
\end{align*}
in probability. Note that $U_{[m,n]}$ has the same distribution as $T_{n-m}$, therefore $\mathcal{L}(W_{t}^{n} - W_{s}^{n} | W^{n}(0,s))$ converges to $N(0,t-s)$, and is clearly independent of $W^{n}(0,2)$. By induction, we also get the asymptitically independent increment property for the sequence $W^{n}$. 

\end{proof}

\bigskip

\begin{lem}
$W^{(n)}_{t}$ satisfies Prohorov's tightness condition, i.e., 
\begin{align*}
\lim_{h \to 0} \limsup_{n \to \infty} \EE [ \sup_{|t-s| \le h} |W^{(n)}_t - W^{(n)}_s| \wedge 1] = 0
\end{align*}
\end{lem}

\begin{proof}
Let $\bar{T}_n := T_n - \frac{2(d-1)^2}{d(2d-1)}n$ be the centered number of turns at step $n$. The idea of the proof is similar to Ottaviani's maximal inequality for random walk on $\RR$ (see Lemma 14.8 of \cite{Kallenberg}). 

Fix $\epsilon > 0$. Let $t \in [0,1)$ and $ h \in (0,1-t)$.  Define $\tau := \min\{ k \in (tn,(t+h)n]: |\bar{T}_k - \bar{T}_{[tn]}| \ge 2 \epsilon \sigma \sqrt{n}\}$. Then, 
\begin{align*}
\PP(| \bar{T}_{\lfl (t+h)n \rfl} - \bar{T}_{\lfl tn \rfl} | > \epsilon \sigma \sqrt{n}) &\geq \PP(\tau \leq (t+h)n, | \bar{T}_{\lfl (t+h)n \rfl} - \bar{T}_{\tau} | \leq \epsilon \sigma \sqrt{n}) \\
&= \sum_{k=\lfl tn \rfl +1}^{\lfl (t+h)n \rfl} \PP(\tau = k) \PP( | \bar{T}_{\lfl (t+h)n \rfl} - \bar{T}_{k} | \leq \epsilon \sigma \sqrt{n} | \tau = k ) \\
&\geq \PP(\tau \leq n) \min_{ k \in (tn,(t+h)n]} \PP( | \bar{T}_{\lfl (t+h)n \rfl} - \bar{T}_{k} | \leq \epsilon \sigma \sqrt{n} | \tau = k ). 
\end{align*}

We want to bound $\PP(\tau \leq n) = \PP(\max_{k} |\bar{T}_{k} - \bar{T}_{\lfl tn \rfl}| > 2 \epsilon \sigma \sqrt{n})$. First note that when $hn$ is large enough, $\bar{T}_{\lfl (t+h)n \rfl} - \bar{T}_{\lfl tn \rfl}$ behaves like a Gaussian with mean $0$ and variance $h \sigma^{2} n$. So, there exists $N(\epsilon)$ such that for all $n > \frac{N(\epsilon)}{h}$, we have
\begin{align} \label{Gaussian decay}
\PP(| \bar{T}_{\lfl (t+h)n \rfl} - \bar{T}_{\lfl tn \rfl} | > \epsilon \sigma \sqrt{n}) \leq \exp{( - \epsilon^{2} / 2h)}. 
\end{align}
On the other hand, $|\bar{T}_{\lfl (t+h)n) \rfl} - \bar{T}_{k}| \leq U_{(k,(t+h)n)} + M_{\lfl hn \rfl}$. According to Proposition \eqref{variance for number of turns}, the variance of the former term on the right hand side is to $(t+h)n-k$ with an error uniformly bounded in $t,h,k$ and $n$. The second term is dominated by a geometric random variable, and thus has a finite variance. So, by Chebyshev's inequality, we have
\begin{align*}
\PP(|\bar{T}_{\lfl (t+h)n \rfl} - \bar{T}_{k}| \leq \epsilon \sigma \sqrt{n} | \tau = k) \geq 1 - \frac{2[(t+h)n-k+C]}{n}. 
\end{align*}
Since $k$ takes values between $tn$ and $(t+h)n$, the minimum bound is achieved at $k=\lfl tn \rfl +1$, so we get
\begin{align*}
\min_{ k \in (tn,(t+h)n]} \PP( | \bar{T}_{\lfl (t+h)n \rfl} - \bar{T}_{k} | \leq \epsilon \sigma \sqrt{n} | \tau = k ) \geq 1 - \frac{2h}{\epsilon^{2}} - \frac{C}{n}, 
\end{align*}
where $C$ is independent of $t,h,k$ and $n$. The last inequality is valid as long as the right hand side is positive, which requires $h$ to take small values and $n$ to take large values. These values depend on $\epsilon$ only. The last bound, together with the bound \eqref{Gaussian decay}, imply
\begin{align} \label{Ottaviani}
\PP(\max_{k \in (tn, (t+hn)]} | \bar{T}_{k} - \bar{T}_{tn} | > 2 \epsilon \sigma \sqrt{n} ) \leq (1 - \frac{2h}{\epsilon^{2}} - \frac{C}{n})^{-1} \exp{( - \epsilon^{2} / 2h)}. 
\end{align}
Since for $n > \frac{16}{\epsilon^{2} \sigma^{2}}$ we have
\begin{align*}
\PP(\sup_{\delta \in (0,h)} | W_{t+\delta}^{(n)} - W_{t}^{(n)} | > \epsilon) \leq \PP(\sup_{k \in (tn,(t+h)n)} |\bar{T}_{k} - \bar{T}_{\lfl tn \rfl}| > \frac{\epsilon}{2} \sigma \sqrt{n}), 
\end{align*}
and the bound on the right hand side of \eqref{Ottaviani} is independent of $t$, one gets
\begin{align*}
\sup_{t \in (0,1)} \PP(\sup_{\delta \in (0,h)} | W_{t+\delta}^{(n)} - W_{t}^{(n)} | > \epsilon) < (1 - \frac{2h}{\epsilon^{2}} - \frac{C}{n})^{-1} \exp{( - \epsilon^{2} / 32h)}
\end{align*}
for all $n > \max \{ \frac{N(\epsilon)}{h}, \frac{16}{\epsilon^{2} \sigma^{2}}\}$. Now divide the interval $(0,1)$ into $\lfl \frac{1}{h} \rfl + 1$ subintervals, each with length at most $h$. Then, $|t-s| < h$ implies that either $s,t$ are in the same subinterval, or they are in two adjacent ones. This observation gives
\begin{align} \label{essential maximal inequality}
\PP(\sup_{|t-s|<h} | W_{t}^{(n)} - W_{s}^{(n)} | > \epsilon) < (\frac{2}{h} + 2) (1 - \frac{2h}{\epsilon^{2}} - \frac{C}{n})^{-1} \exp{( - \epsilon^{2} / 32h)}
\end{align}
for all $n > \max \{ N(\epsilon), \frac{16}{\epsilon^{2} \sigma^{2}}\}$. Since
\begin{align*}
\EE [\sup_{|t-s|<h} | W_{t}^{(n)} - W_{s}^{(n)}| \wedge 1] \leq \epsilon + \PP(\sup_{|t-s|<h} | W_{t}^{(n)} - W_{s}^{(n)} | > \epsilon), 
\end{align*}
the maximal inequality \eqref{essential maximal inequality} quickly gives
\begin{align*}
\lim_{h \to 0} \limsup_{n \to \infty} \EE [ \sup_{|t-s| \le h} |W^{(n)}_t - W^{(n)}_s| \wedge 1] \leq \epsilon, 
\end{align*}
which implies Prohov's tightness condition since $\epsilon$ is arbritary. 

\end{proof}

\bigskip

Combining the above two lemmas, we prove the invariance principle. 

\end{proof}

\end{document}